\newcommand{\C}{\mathbb{C}}
\newcommand{\D}{\mathbb{D}}
\newcommand{\clD}{{\overline{\,\D}}}
\newcommand{\R}{\mathbb{R}}
\newcommand{\bT}{\mathbb{T}}
\renewcommand{\d}{\mathrm{d}}
\newcommand\E{\mathrm{e}}
\newcommand\I{\mathrm{i}}
\newcommand{\cA}{\mathcal{A}}
\newcommand{\cC}{\mathcal{C}}
\newcommand{\cO}{\mathcal{O}}
\newcommand\wt{\widetilde}
\newcommand{\Psh}{\mathrm{Psh}}
\newcommand{\ol}{\overline}
\newcommand\wh{\widehat}
\newcommand\reg{\mathrm{reg}}
\newcommand\sing{\mathrm{sing}}
\newcommand\bs{\backslash}
\newcommand\di{\partial}
\newcommand\dist{\mathrm{dist}}
\newcommand\dibar{\bar\partial}
\newtheorem{theorem}{Theorem}[section]
\newtheorem{lemma}[theorem]{Lemma}
\newtheorem{corollary}[theorem]{Corollary}
\newtheorem{proposition}[theorem]{Proposition}
\theoremstyle{definition}
\newtheorem{definition}[theorem]{Definition}
\newtheorem{example}[theorem]{Example}
\newtheorem{remark}[theorem]{Remark}
\def\ss{\Subset}
\begin{document}
\title{The Poletsky-Rosay theorem on singular complex spaces}
\author{Barbara Drinovec Drnov\v sek \& Franc Forstneri\v c}
\address{Faculty of Mathematics and Physics, University of Ljubljana, 
and Institute of Mathematics, Physics and Mechanics, Jadranska 19, 
1000 Ljubljana, Slovenia}
\email{barbara.drinovec@fmf.uni-lj.si}
\email{franc.forstneric@fmf.uni-lj.si}
\thanks{Research supported by grants P1-0291 and J1-2152, Republic of Slovenia.}

%
%    General info
%

\subjclass{Primary 32U05; Secondary 32H02, 32E10}   
\date{\today} 
\keywords{Complex spaces, Stein space, plurisubharmonic function, disc functional}

%
%
%  ABSTRACT
%
%
\begin{abstract}
In this paper we extend the Poletsky-Rosay theorem, 
concerning plurisubharmonicity of the Poisson envelope
of an upper semicontinuous function, 
to locally irreducible complex spaces.
\end{abstract}

\maketitle

\section{Introduction}
Plurisubharmonic functions were introduced by 
K.\ Oka \cite{Oka1942} and P.\ Lelong \cite{Lelong1942} in 1942;
ever since then they have been playing a major role in complex analysis.  
The minimum of two plurisubharmonic functions is not plurisubharmonic 
in general. There has been a considerable amount  
of interest in studying situations where the 
infimum actually is plurisubharmonic. The first major result of this kind 
was Kiselman's  {\em minimum principle} \cite{Kiselman1978}. 

In the early 1990's E.\ Poletsky \cite{Poletsky1991,Poletsky1993} 
found a novel way of constructing plurisubharmonic functions 
as pointwise infima of upper semicontinuous functions. 
Set $\D=\{\zeta\in \C\colon |\zeta|< 1\}$ and 
$\bT=b\D=\{\zeta\in\C\colon |\zeta|=1\}$. Let $\cA(\D,X)$ denote the 
set of all {\em analytic discs} in a complex space $X$, 
that is, continuous maps $\clD \to X$ that are holomorphic in $\D$.
Then for $x\in X$ let $\cA(\D,X,x)=\{f\in \cA(\D,X)\colon f(0)=x\}$. 
Our main result is the following.

%%%%%%%%%%%%%%%%%%%%%%%%%%%%%
%                           %
%   MAIN THEOREM            %
%                           %
%%%%%%%%%%%%%%%%%%%%%%%%%%%%%
%
\begin{theorem}
\label{Poletsky-Rosay}
Let $(X,\cO_X)$ be an irreducible and locally irreducible 
(reduced, paracompact) complex space,
and let $u\colon X\to \R\cup\{-\infty\}$ be an upper semicontinuous 
function on $X$. Then the function 
\begin{equation}
\label{eq:Poisson-funct}
   \wh u(x) = \inf \Big\{\int^{2\pi}_0 u(f(\E^{\I t}))\, 
   \frac{\d t}{2\pi} \colon \ f\in \cA(\D,X,x) \Big\},
    \quad x\in X
\end{equation}
is plurisubharmonic on $X$ or identically $-\infty$; 
moreover, $\wh u$ is the supremum of the plurisubharmonic 
functions on $X$ which are not greater than $u$. 
\end{theorem}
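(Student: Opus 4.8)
The plan is to establish the two assertions in the order: (i) $\wh u$ lies between $u$ and the supremum of all plurisubharmonic minorants of $u$; (ii) $\wh u$ is itself plurisubharmonic, or $\equiv-\infty$ — which promotes (i) to the equality in the statement. Assertion (i) is elementary: inserting the constant disc $f\equiv x$ into \eqref{eq:Poisson-funct} gives $\wh u(x)\le u(x)$, while if $v$ is plurisubharmonic on $X$ with $v\le u$ then $v\circ f$ is subharmonic on $\D$ for every $f\in\cA(\D,X)$ (a standard property of plurisubharmonic functions on reduced complex spaces), so the sub-mean value inequality yields $v(x)=v(f(0))\le\frac1{2\pi}\int_0^{2\pi}v(f(\E^{\I t}))\,\d t\le\frac1{2\pi}\int_0^{2\pi}u(f(\E^{\I t}))\,\d t$ for all $f\in\cA(\D,X,x)$, whence $v\le\wh u$. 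So everything reduces to (ii).

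For (ii) I would first reduce to \emph{continuous} $u$: since $X$ is metrizable, $u$ is a decreasing limit of continuous real-valued functions $u_j$, hence (by monotone convergence along each fixed disc) $\wh u=\inf_j\wh{u_j}$ is a decreasing limit of the $\wh{u_j}$, and a decreasing limit of plurisubharmonic functions is plurisubharmonic or $\equiv-\infty$ — and on the irreducible space $X$ (whose regular locus is connected) these are the only options. Since plurisubharmonicity is local and can be tested on analytic discs with image in a small neighbourhood, I fix $x_0\in X$, choose a neighbourhood $\Omega\ni x_0$ biholomorphic to a closed analytic subset of a ball in some $\C^N$, and must prove that $\wh u$ is upper semicontinuous near $x_0$ and that $\wh u(g(0))\le\frac1{2\pi}\int_0^{2\pi}\wh u(g(\E^{\I t}))\,\d t$ for every analytic disc $g\colon\clD\to\Omega$. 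Upper semicontinuity rests on the key local consequence of \emph{local irreducibility}: every point of $\Omega$ has arbitrarily small neighbourhoods inside which any two points are joined by an analytic disc with arbitrarily small boundary. Given $x$ near $x_0$ and a near-optimal $f\in\cA(\D,X,x_0)$ for $\wh u(x_0)$, reattaching such a tiny connecting disc at the centre of $f$ produces $f'\in\cA(\D,X,x)$ with boundary integral exceeding that of $f$ by an arbitrarily small amount (using continuity of $u$), so $\limsup_{x\to x_0}\wh u(x)\le\wh u(x_0)$.

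The sub-mean value inequality is the heart of the matter, and the \emph{gluing} step in its proof is the main obstacle. Given $g\colon\clD\to\Omega$ and $\varepsilon>0$, pick for a.e.\ $t$ a disc $h_t\in\cA(\D,X,g(\E^{\I t}))$ with $\frac1{2\pi}\int_0^{2\pi}u(h_t(\E^{\I s}))\,\d s<\wh u(g(\E^{\I t}))+\varepsilon$; by the upper semicontinuity just proved, together with the small connecting discs from local irreducibility, a partition of $\bT$ into finitely many short arcs reduces this to finitely many discs. One must then produce a single $F\in\cA(\D,X,g(0))$ which is uniformly close to $g$ on $\{|\zeta|\le 1-\delta\}$ and whose boundary samples the curves $h_t(\bT)$ in the right proportions, so that $\frac1{2\pi}\int_0^{2\pi}u(F(\E^{\I\theta}))\,\d\theta\le\frac1{2\pi}\int_0^{2\pi}\bigl(\wh u(g(\E^{\I t}))+\varepsilon\bigr)\,\d t+O(\varepsilon)$; letting $\varepsilon\to0$ finishes the inequality. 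This is Poletsky's and Rosay's disc-gluing construction, and its extension to the singular setting is where the genuine work lies, because the classical construction in $\C^N$ corrects the disc by adding holomorphic maps and thereby risks leaving the subvariety $X$. I expect to overcome this by transporting the problem to a resolution of singularities $\pi\colon\wt X\to X$: after first passing to the normalization (which, by local irreducibility, is a homeomorphism onto $X$, identifies plurisubharmonic functions and analytic discs, and replaces $X$ by a normal space whose singular locus has codimension at least two), a \emph{generic} analytic disc meets the singular locus in a discrete subset of $\D$ only and therefore lifts to the manifold $\wt X$; the known Poletsky–Rosay theorem on $\wt X$ applies to the lifted data, and the resulting envelope descends to $X$ because $\pi$ has connected fibres and a plurisubharmonic function on $\wt X$ is constant on each connected compact analytic subset of $\wt X$. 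The delicate technical points, which I would isolate as preliminary lemmas, are the approximation of an arbitrary disc by such generic discs (again via local irreducibility and continuity of $u$) and the control of the boundary behaviour of lifted discs near the singular locus. Combining upper semicontinuity with the sub-mean value inequality shows $\wh u$ is plurisubharmonic near $x_0$, hence on all of $X$; and when $\wh u$ fails to be locally bounded below somewhere, irreducibility of $X$ forces $\wh u\equiv-\infty$.
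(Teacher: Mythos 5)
Your overall strategy --- pass to the normalization (a homeomorphism, by local irreducibility), then to a Hironaka resolution $\pi\colon\wt X\to X$, apply the known Poletsky--Rosay theorem on the manifold $\wt X$, and descend --- is exactly the alternative route the authors mention in the introduction and deliberately avoid; the paper instead reduces only to the normal case (via seminormalization) and proves the sub-mean value inequality directly on $X$ by gluing sprays of discs (the nonlinear Cousin-I Proposition \ref{gluing-sprays} and Lemma \ref{RH2}), treating $X_{\sing}$ afterwards by upper regularization. The route is viable in principle, but as written it has genuine gaps. The first is your upper semicontinuity argument: the claimed ``key local consequence of local irreducibility'' --- that any two nearby points are joined by a single analytic disc with arbitrarily small boundary --- is not justified (producing discs through or near a singular point with prescribed centre is precisely the nontrivial part), and ``reattaching a tiny connecting disc at the centre of $f$'' is not a defined operation: a concatenation of analytic discs is not an analytic disc, and on a singular space you cannot translate $f$ as in $\C^n$. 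The paper's substitute is the spray machinery (Lemma \ref{sprays-exist}): embed the near-optimal disc as the core of a dominating spray, so the centres $f(t,0)$ fill a neighbourhood of $x$ while $P_u(f(t,\cdot))$ changes little. (In your resolution route this step could in the end be dispensed with, but only once the identification of $\wh u$ with a plurisubharmonic function is complete, which brings us to the second gap.)

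The descent from $\wt X$ to $X$ is where the real difficulty sits, and it is passed over. Constancy of the envelope $\wh{u\circ\pi}$ on the connected compact fibres of $\pi$ gives a well-defined upper semicontinuous function $w$ on $X$ with $w\circ\pi=\wh{u\circ\pi}$, psh on $X_{\reg}$; but plurisubharmonicity of $w$ \emph{at points of} $X_{\sing}$ does not follow from ``connected fibres''. One needs normality of $X$ together with the Grauert--Remmert theorem \cite[Satz 4]{Grauert-Remmert} (the very ingredient of the paper's Step 5), plus the fact that a psh function on the manifold $\wt X$ is recovered as the limsup along the complement of the nowhere dense analytic set $\pi^{-1}(X_{\sing})$, to see that $w$ agrees with the upper regularization of $w|_{X_{\reg}}$ and hence is psh on all of $X$. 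Moreover the identity $\wh u=w$ needs both directions: $w\le\wh u$ follows once $w\in\Psh(X)$ and $w\le u$, while $\wh u\le w$ requires the lifting lemma you only flag --- a disc $f\in\cA(\D,X)$ with $f(\D)\not\subset X_{\sing}$ meets $X_{\sing}$ in a discrete subset of $\D$ and lifts to $\wt X$, where extension of the lift across those punctures needs an argument (e.g.\ a locally projective resolution and a graph-closure/valuative argument), and continuity of the lift up to $\bT$ needs a preliminary shrinking $\zeta\mapsto f(r\zeta)$, $r\to1$. None of this is fatal, but these lemmas are the actual content a complete proof must supply, and filling them in amounts to roughly the work the paper spends on its direct gluing argument --- which is the authors' stated reason for rejecting this reduction.
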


In the basic case when $X=\C^n$ this was proved by 
Poletsky \cite{Poletsky1991,Poletsky1993} 
and by Bu and Schachermayer \cite{Bu-Schachermayer}.
The result was extended to some complex manifolds 
(and to certain other disc functionals)
by L\'arusson and Sigurdsson 
\cite{Larusson-Sigurdsson1998,Larusson-Sigurdsson2003,Larusson-Sigurdsson2009}, 
and to all complex manifolds by Rosay \cite{Rosay:Poletsky1,Rosay:Poletsky2}.
In this paper we use the method of gluing sprays,  
developed in \cite{BDF1,FFAsian}, to give a new proof
which applies to any normal complex space $X$. 
For a locally irreducible space 
the result follows easily by using seminormalization to 
reduce to the case of a normal space. 

A reduction to the Poletsky-Rosay theorem on manifolds 
is also possible by appealing to the Hironaka desingularization 
theorem, but this approach seems unreasonable since 
our proof is no more difficult than the 
original proofs given for manifolds. 
%
%
%
%  THE COUNTEREXAMPLE
%
%
\begin{example}
\label{counterex}
The theorem fails in general if $X$ is not irreducible;
here is a trivial example. Let $X=\{(z,w)\in \C^2\colon zw=0\}$
be the union of two complex lines. Let $u\colon X\to \R$ 
be defined by $u(z,0)=0$ for $z\in \C^*$ and
$u(0,w)=1$ for all $w\in \C$. Clearly $u$ is upper 
semicontinuous. We have $\wh u(z,0)=0$ for $z\in \C$ 
and $\wh u(0,w)=1$ for $w\in\C^*$; hence $\wh u$ 
fails to be upper semicontinuos at the point $(0,0)$. 
On the other hand, local irreducibility
is not always necessary. For example, if $X$
is the Riemann sphere with one simple double point,
then the above example does not work since the boundaries
of analytic discs through any of the two local 
branches at the double point also reach the other branch.
\qed\end{example}

%
%
%
%  GENERAL OBSERVATIONS
%
%
The operator $P_u\colon \cA(\D,X) \to\R\cup \{-\infty\}$ appearing 
in (\ref{eq:Poisson-funct}),
\[
	P_u(f)= \int^{2\pi}_0 u(f(\E^{\I t}))\, \frac{\d t}{2\pi},\qquad
	f\in \cA(\D,X)
\]
is called the {\em Poisson functional}. Note that $P_u(f)$ 
is the value at $0\in \D$ of the harmonic function on 
$\D$ with boundary values $u(f(\E^{\I t}))$.
For this reason, the function $\wh u$ defined by (\ref{eq:Poisson-funct})
is also called the {\em Poisson envelope} of $u$.

Let us first justify the last statement in the theorem.
Recall that an upper semicontinuous function
$v\colon X\to\R\cup\{-\infty\}$ on a complex space 
that is not identically $-\infty$ on any irreducible 
component of $X$ is plurisubharmonic 
if every point $x\in X$ admits a neighborhood $U\subset X$,
embedded as a closed complex subvariety in a domain
$\Omega\subset \C^N$, such that $v|_U$ is the restriction 
to $U$ of a plurisubharmonic function $\wt v$ on $\Omega$. 
By \cite{FN} a function $v$ as above 
is plurisubharmonic if and only if the composition 
$v\circ f$ with any holomorphic disc $f\colon \D\to X$
is subharmonic on $\D$. This holds if and only if $v$  
satisfies the submeanvalue property on analytic discs, 
which precisely means that $v(x)\le P_v(f)$ 
for every $x\in X$ and $f\in \cA(\D,X,x)$. Since 
for the constant disc $f(\zeta)=x$ we have $P_v(f)=v(x)$,
we conclude that 

\begin{lemma}
\label{characterization}
An upper semicontinuous function $v\colon X\to\R\cup\{-\infty\}$
on a complex space $X$ that is not identically $-\infty$ 
on any irreducible component of $X$ is plurisubharmonic 
if and only if $v=\wh v$, where $\wh v$ is defined 
by (\ref{eq:Poisson-funct}).
\end{lemma}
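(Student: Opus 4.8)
The plan is to make precise the chain of equivalences already sketched above, relying on the characterization of plurisubharmonicity on complex spaces from \cite{FN} together with the elementary remark that the constant disc is admissible in the infimum (\ref{eq:Poisson-funct}).

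First I would check that $P_v(f)$ is a well-defined element of $[-\infty,\infty)$ for every $f\in\cA(\D,X)$, so that $\wh v$ is meaningful: since $v$ is upper semicontinuous and $f(\bT)$ is compact, the function $t\mapsto v(f(\E^{\I t}))$ is upper semicontinuous on $[0,2\pi]$, hence Borel measurable and bounded above, and its integral over $[0,2\pi]$ is a well-defined element of $[-\infty,\infty)$. Next, for any $x\in X$ the constant disc $f(\zeta)\equiv x$ lies in $\cA(\D,X,x)$ and satisfies $P_v(f)=v(x)$, so $\wh v(x)\le v(x)$ for every $x$. Hence $v=\wh v$ holds if and only if $v(x)\le P_v(f)$ for every $x\in X$ and every $f\in\cA(\D,X,x)$, i.e.\ if and only if $v$ has the sub-mean-value property along boundaries of analytic discs centered at the point in question.

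It then remains to identify this sub-mean-value property with plurisubharmonicity, which is exactly where \cite{FN} enters. For the forward implication, if $v$ is plurisubharmonic then $v\circ f$ is subharmonic on $\D$ for every $f\in\cA(\D,X)$ by \cite{FN}; applying the sub-mean-value inequality for subharmonic functions on the circles $\{|\zeta|=r\}$ with $r<1$ and letting $r\uparrow1$ — using that $v\circ f$ is bounded above near $\bT$, together with the reverse Fatou lemma and the upper semicontinuity of $v\circ f$ on $\clD$ — yields $v(f(0))\le P_v(f)$, as desired. Conversely, assume $v$ has the sub-mean-value property. Given $f\in\cA(\D,X)$, a point $\zeta_0\in\D$, and $r>0$ with the closed disc $\{\zeta\in\C:|\zeta-\zeta_0|\le r\}$ contained in $\D$, the reparametrization $g(\zeta):=f(\zeta_0+r\zeta)$ belongs to $\cA(\D,X,f(\zeta_0))$, and applying the hypothesis to $g$ gives $(v\circ f)(\zeta_0)=v(g(0))\le P_v(g)=\frac{1}{2\pi}\int_0^{2\pi}(v\circ f)(\zeta_0+r\E^{\I t})\,\d t$. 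Thus $v\circ f$ is upper semicontinuous on $\D$ and satisfies the sub-mean-value property on every circle bounding a disc contained in $\D$, hence is subharmonic there; as this holds for all $f$, \cite{FN} shows that $v$ is plurisubharmonic. (The standing hypothesis that $v$ is not identically $-\infty$ on any irreducible component of $X$ is part of both the statement and the definition of plurisubharmonicity, so nothing further must be verified on that account.)

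The argument is essentially bookkeeping. The one spot that calls for a little care is the passage from the interior circles $\{|\zeta|=r\}$, $r<1$, to the boundary circle $\bT$ in the forward direction, where one combines local boundedness from above with the upper semicontinuity of $v\circ f$ on $\clD$ to pass to the limit under the integral sign; I expect this to be the only — and quite mild — obstacle.
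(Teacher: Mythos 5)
Your proposal is correct and follows essentially the same route as the paper, which proves the lemma in the paragraph preceding its statement: combine the characterization from \cite{FN} (plurisubharmonicity of $v$ is equivalent to subharmonicity of $v\circ f$ for holomorphic discs $f$) with the observation that the constant disc realizes $P_v(f)=v(x)$, so that $v=\wh v$ is exactly the sub-mean-value property $v(x)\le P_v(f)$. You merely flesh out the two directions of that equivalence (the reverse-Fatou passage $r\uparrow 1$ and the reparametrization onto small interior circles), which the paper leaves implicit.
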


If $v\le u$ then clearly $P_v(f)\le P_u(f)$ for every $f\in\cA(\D,X)$,
and hence $\wh v\le \wh u$. It follows that any plurisubharmonic
function $v$ for which $v\le u$ satisfies $v=\wh v\le \wh u\le u$, so 
$\wh u$ is indeed the largest such function. 

One of the main applications of Theorem \ref{Poletsky-Rosay} 
in the classical case $X=\C^n$ is the characterization of
the polynomially convex hull $\wh K$ of a compact set 
$K\subset \C^n$ by analytic discs, due to Poletsky \cite{Poletsky1991,Poletsky1993}
and Bu and Schachermayer \cite{Bu-Schachermayer}.
(See also Remark \ref{DSW} below.)
In the situation considered in this paper
we obtain a characterization of {\em plurisubharmonic hulls}
in terms of analytic discs, a fact that was already observed 
(for complex manifolds) by L\'arusson and Sigurdsson and by Rosay. 
Let $\Psh(X)$ denote the set of all plurisubharmonic functions 
on $X$. Given a compact set $K$ in $X$, its plurisubharmonic hull 
is defined by 
\[
	\wh K_{\Psh(X)} = 
	\bigl\{x\in X\colon u(x)\le \sup_K u, \ \forall u\in \Psh(X)\bigr\}.
\]
Since the modulus $|f|$ of a holomorphic function 
is a plurisubharmonic function, we always have 
$\wh K_{\Psh(X)} \subset \wh K_{\cO(X)}$. 
It is a much deeper result of Grauert \cite{Grauert:Levi}
and Narasimhan \cite{Narasimhan1961} that the two hulls 
coincide if $X$ is a Stein space or, more generally,
a 1-convex complex space. 
(See also the paper \cite{FN} and \cite{Grauert-Remmert1977}.)
In the case when $X=\C^n$, the equality of the two hulls
also follows from Poletsky's theorem as was pointed out in
\cite[Theorem 5.1]{Poletsky1996}. Related results
concerning {\em pluripolar hulls} of compact sets in 
$\C^n$ were obtained in \cite{Levenberg-Poletsky}.

%%%%%%%%%%%%%%%%%%%%%%%%%%%%%%%
%															%
%   PLURISUBHARMONIC HULLS		%
%															%
%%%%%%%%%%%%%%%%%%%%%%%%%%%%%%%

\begin{corollary}
\label{Cor1}
Let $K$ be a compact set in a locally irreducible complex space $X$
such that $\wh K_{\Psh(X)}$ is compact. 
Choose an open set $V\Subset X$ containing $\wh K_{\Psh(X)}$. 
Then a point $x\in X$ belongs to $\wh K_{\Psh(X)}$ if and only
if for every open set $U\supset K$ and every number 
$\epsilon>0$ there exist a disc $f\in \cA(\D,V)$ 
and a set $E_f\subset [0,2\pi]$ of 
Lebesgue measure $|E_f| <\epsilon$ such that 
\[
	 	f(0)=x\quad {\rm and} \quad
	 	f(\E^{\I t})\in U\ \hbox{for all } t\in [0,2\pi]\bs E_f.  
\]
\end{corollary}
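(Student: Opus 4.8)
The plan is to treat the two implications separately; the ``only if'' direction is elementary, while the ``if'' direction reduces, via Theorem~\ref{Poletsky-Rosay} applied \emph{inside} $V$, to a localization property of the plurisubharmonic hull, which I expect to be the main obstacle.

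For the ``only if'' direction, suppose $x$ satisfies the disc condition and let $v\in\Psh(X)$; it suffices to show $v(x)\le c$ for every $c>\sup_K v$. The set $U=\{y\in X\colon v(y)<c\}$ is open (as $v$ is upper semicontinuous) and contains $K$, so for each $\epsilon>0$ the hypothesis gives $f\in\cA(\D,V,x)$ and $E_f\subset[0,2\pi]$ with $|E_f|<\epsilon$ and $f(\E^{\I t})\in U$ for $t\notin E_f$. Since $v$ is plurisubharmonic, $v\circ f$ is subharmonic on $\D$, whence $v(x)\le\frac1{2\pi}\int_0^{2\pi}v(f(\E^{\I t}))\,\d t$; the integrand is $<c$ off $E_f$ and is bounded above on $E_f$ by $M:=\sup_{\ol V}v<\infty$, the bound being finite because $v$ is upper semicontinuous on the compact set $\ol V$ and $f(\clD)\subset V$. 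Keeping track of signs, this yields $v(x)\le c+O(\epsilon)$, and letting $\epsilon\to0$ and then $c\downarrow\sup_K v$ gives $v(x)\le\sup_K v$, so $x\in\wh K_{\Psh(X)}$.

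For the converse, fix $x\in\wh K_{\Psh(X)}$, an open set $U\supset K$, and $\epsilon>0$. A disc in $\cA(\D,V)$ has connected image, so I may replace $V$ by its connected component $V_0$ containing $x$; being connected, reduced, locally irreducible and paracompact, $V_0$ is irreducible, so Theorem~\ref{Poletsky-Rosay} applies to it. Let $U_0=U\cap V_0$, an open subset of $V_0$ containing $K\cap V_0$, and apply Theorem~\ref{Poletsky-Rosay} on $V_0$ to the upper semicontinuous function $u$ on $V_0$ equal to $0$ on $U_0$ and to $1$ on $V_0\setminus U_0$. The function $\wh u$ defined by (\ref{eq:Poisson-funct}) on $V_0$ (i.e.\ using analytic discs in $V_0$) is then plurisubharmonic on $V_0$, satisfies $0\le\wh u\le1$, vanishes on $U_0$, and equals the supremum of the plurisubharmonic functions on $V_0$ that are $\le u$; moreover $\wh u(x)=\inf\{P_u(f)\colon f\in\cA(\D,V_0,x)\}$ by definition. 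Hence, once I know $\wh u(x)=0$, any $f\in\cA(\D,V_0,x)$ with $P_u(f)<\epsilon/(2\pi)$ does the job: the set $E_f:=\{t\colon f(\E^{\I t})\notin U_0\}$ has measure $2\pi P_u(f)<\epsilon$, and $U_0\subset U$, $V_0\subset V$.

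So everything comes down to $\wh u(x)=0$. Since $\wh u\ge0$, this amounts to showing $\psi(x)\le0$ for every $\psi\in\Psh(V_0)$ with $\psi\le u$; such $\psi$ is bounded above (by $1$) and satisfies $\psi\le0$ on $K\cap V_0$. This localization of the plurisubharmonic hull from $X$ to $V_0$ is the crux of the proof, and I would establish it by extending $\psi$ across all of $X$. Choose open sets with $\wh K_{\Psh(X)}\cap V_0\Subset V_1\Subset V_2\Subset V_0$; here $\wh K_{\Psh(X)}\cap V_0$ is compact, being relatively open and relatively closed in the compact set $\wh K_{\Psh(X)}\subset V$. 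The compact set $\ol{V_2}\setminus V_1$ is disjoint from $\wh K_{\Psh(X)}$, so a finite-cover argument based on the definition of the hull (with the separating functions taken continuous) produces $\phi\in\Psh(X)$ with $\phi<0$ on $\wh K_{\Psh(X)}$ and $\phi>0$ on $\ol{V_2}\setminus V_1$. For $t>0$ large enough one has $t\phi>\psi$ on $\ol{V_2}\setminus V_1$, so the function equal to $\max(t\phi,\psi)$ on $V_2$ and to $t\phi$ on $X\setminus V_1$ is well defined and plurisubharmonic on $X$; it is $\le0$ on $K$ and is $\ge\psi(x)$ at $x$, so $x\in\wh K_{\Psh(X)}$ forces $\psi(x)\le0$, as required. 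The delicate point I expect to spend care on is precisely the continuity of the separating functions $\phi$ --- which is what makes the finite subcover meaningful --- and this is legitimate because the hull is unchanged when computed with continuous plurisubharmonic functions.
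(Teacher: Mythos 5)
Your easy direction (the disc condition implies $x\in\wh K_{\Psh(X)}$) is essentially the paper's argument, and your hard direction follows the paper's route as well: apply Theorem \ref{Poletsky-Rosay} on $V$ (you rightly pass to the component $V_0\ni x$, a point the paper leaves implicit) to the two-valued function attached to $U$, and extract the disc from the value of the Poisson envelope at $x$. The entire weight of that direction rests on the single claim $\wh u(x)=0$ (in the paper's normalization, $v(x)=-1$), i.e.\ on the localization statement that every $\psi\in\Psh(V_0)$ with $\psi\le u$ satisfies $\psi(x)\le 0$; the paper dispatches this value in one line, and you have correctly isolated it as the crux.

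However, your proof of that localization has a genuine gap. The barrier $\phi\in\Psh(X)$ with $\phi<0$ on $\wh K_{\Psh(X)}$ and $\phi>0$ on the compact collar $\ol{V_2}\setminus V_1$ is produced by a finite subcover of sets of the form $\{\rho_q>0\}$, where $\rho_q\in\Psh(X)$ separates $q$ from the hull. Since plurisubharmonic functions are only upper semicontinuous, these superlevel sets need not be open, and a pointwise positive usc function need not have positive infimum on a compact set --- which you also need in order to arrange $t\phi>\psi$ on the collar. You repair this by invoking, as if known, that $\wh K_{\Psh(X)}$ is unchanged when computed with \emph{continuous} plurisubharmonic functions. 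That claim is not established in the paper, is not a standard fact for a general locally irreducible complex space, and there is no general device for replacing an usc plurisubharmonic separator by a continuous one (no convolution smoothing on a general complex space; Richberg-type results only pass from continuous to smooth). It does hold when $X$ is Stein, where $\wh K_{\Psh(X)}=\wh K_{\cO(X)}$ by Grauert--Narasimhan (cited in the paper), or when $X=\C^n$, but the corollary is asserted for arbitrary locally irreducible $X$. So, as written, the implication from $x\in\wh K_{\Psh(X)}$ to the disc condition is not proved: you need either a different construction of the barrier (or of the extension of $\psi$ from $V_0$ to $X$), or a proof or reference for the continuous-hull claim in this generality.
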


\begin{proof}
Assume first that a point $x\in X$ satisfies the stated conditions;
we shall prove that $x\in \wh K_{\Psh(X)}$.
Choose a function $\rho\in \Psh(X)$. Set $M=\sup_K \rho$ and $M'=\sup_{V} \rho$. 
Pick a number $\epsilon>0$ and an open set $U$ with 
$K\subset U\Subset V$  such that $\sup_U \rho < M+\epsilon$. 
Let the disc $f\in \cA(\D,V,x)$ and the set $E_f\subset [0,2\pi]$ 
satisfy the hypotheses of the corollary. Then 
\[
	\rho(x) \le P_\rho(f)= \int_{E_f} \rho(f(\E^{\I t}))\, \frac{\d t}{2\pi} 
	          + \int_{[0,2\pi]\bs E_f} \rho(f(\E^{\I t}))\, \frac{\d t}{2\pi} 
          < M'\epsilon + M+\epsilon.  
\]
Since this holds for every $\epsilon>0$, we get
$\rho(x)\le M$. As $\rho\in \Psh(X)$ was arbitrary,
we conclude that  $x\in \wh K_{\Psh(X)}$.

Conversely, assume that $x\in \wh K_{\Psh(X)}$. 
The function $u\colon V\to\R$ which equals $-1$ on the 
open set $U\supset K$ and equals $0$ on $V\bs U$ 
is upper semicontinuous. 
Let $v=\wh u$ be the associated plurisubharmonic function
defined by (\ref{eq:Poisson-funct}). 
Then  $-1\le v\le 0$ on $V$,
and $v(x)=-1$. Theorem \ref{Poletsky-Rosay} furnishes
a disc $f\in \cA(\D,V,x)$ with $P_u(f) < -1+ \epsilon/2\pi$. 
By the definition of $u$ this implies that the set 
$E_f=\{t\in [0,2\pi] \colon f(\E^{\I t}) \notin U\}$ has measure 
at most $\epsilon$.
\end{proof}

%%%%%%%%%%%%%%%%%%%%%%%%%%%%%%%%%
%																%
%  REMARK: WOLD, DUVAL-SIBONY		%
%																%	
%%%%%%%%%%%%%%%%%%%%%%%%%%%%%%%%%
\begin{remark}
\label{DSW}
Let $K$ be a compact subset of $\C^n$.
It was shown by Duval and Sibony \cite{Duval-Sibony1,Duval-Sibony2} 
that for any point $p\in\wh K$ and Jensen measure $\sigma$
representing $p$ there exists a positive current $T$ 
of bidimension $(1,1)$ such that $\d\d^c T =\sigma-\delta_p$, where $\delta_p$ 
denotes the point evaluation at $p$. Recently Wold \cite{Wold2010} showed that every  
Duval-Sibony current $T$ is a weak limit of currents $T_j=(f_j)_* G$, 
where $G$ is the {\em Green current} on the unit disc $\D$,
given by 
\[
	G(\omega)= - \int_\D  \log |\zeta| \cdotp \omega,\qquad
	\omega\in \mathcal{E}^{1,1}(\clD),
\]
and $f_j\colon \clD\to\C^n$ is a sequence of Poletsky discs.
On the other hand, it has been known since the classical
examples of Stolzenberg and Alexander that the polynomial 
hull of $K$ can not be explained in general by 
analytic varieties with boundaries in $K$.
(In this direction see the recent paper of Dujardin \cite{Dujardin}.)
Hence Poletsky's characterization of the polynomial 
hull remains the most universal one that we have at the moment.
For more about hulls we refer the interested reader to Stout's monographs
\cite{Stout:uniformalgebras,Stout:polynomial}.
\qed\end{remark}

%%%%%%%%%%%%%%%%%%%%%%%%%%%%%%%%%%%%%%%%%%%%%%%%%%%%%%%%%%%%%%%%%%%%%
%                                                   								%
%																								    								%
%  comment on Rosay's paper                                      		%
% 										     																					%
%																								    								%
%																								    								%
%%%%%%%%%%%%%%%%%%%%%%%%%%%%%%%%%%%%%%%%%%%%%%%%%%%%%%%%%%%%%%%%%%%%%

\begin{remark}
Another immediate implication of Theorem \ref{Poletsky-Rosay} 
is the following result which was observed in the smooth case by 
J.-P.\ Rosay \cite[Corollary 0.2]{Rosay:Poletsky1}:
If $X$ is as in Theorem \ref{Poletsky-Rosay} and if 
every bounded plurisubharmonic function on $X$ is constant, then 
for every point $p\in X$, nonempty open set $U\subset X$ 
and number $\epsilon>0$ there exists an analytic disc 
$f\colon \ol \D\to X$ such that $f(0)=p$ and 
the set $\{ t \in [0,2\pi)\colon f(\E^{\I t})\in U\}$
has measure at least $2\pi-\epsilon$. This follows
by observing that the envelope $\wh u$ defined by (\ref{eq:Poisson-funct}) of the 
negative characteristic function $u=-\chi_U$ of the set $U$ is bounded
from above by $0$, and hence it is constantly equal to $-1$.

We expect that the methods of our proof can be used 
to extend the main theorem of Rosay in \cite{Rosay:Poletsky3} 
as follows: A locally irreducible complex space $X$
does not admit any nonconstant bounded plurisubharmonic function
(such space is said to be Liouville) if and only if
every closed loop in $X$ can be approximated on a
set of almost full linear measure in the circle 
by the boundary values of holomorphic discs in $X$.
We hope to return to these questions in a future publication.
\qed\end{remark}

%%%%%%%%%%%%%%%%%%%%%%%%%%%%%%%%%%%%%%%%%%%%%%%%%%%%%%%%%%%%%%%%%%%%%
%                                                   								%
%																								    								%
%  A NONLINEAR COUSIN-I PROBLEM                                  		%
% 										     																					%
%																								    								%
%																								    								%
%%%%%%%%%%%%%%%%%%%%%%%%%%%%%%%%%%%%%%%%%%%%%%%%%%%%%%%%%%%%%%%%%%%%%

\section{A nonlinear Cousin-I problem} % Holomorphic sprays}
\label{sprays}
We recall from \cite{BDF1,FFAsian} 
the relevant results concerning holomorphic sprays,
adjusting them to the applications in this paper.

\begin{definition}
\label{Spray}
Let $\ell\ge 2$ and $r\in\{0,\ldots,\ell\}$ be integers.
Assume that $X$ is a complex space, $D$ is a relatively compact 
domain with $\cC^\ell$ boundary in $\C$,
and $\sigma$ is a finite set of points in $D$.
A {\em spray of maps} of class $\cA^r(D)$ with the 
exceptional set $\sigma$ and with values in $X$ 
is a map $f\colon P\times \bar D\to X$, where $P$ 
(the {\em parameter set} of the spray) is an open subset of 
a Euclidean space $\C^m$ containing the origin, 
such that the following hold: 
\begin{itemize}
\item[(i)]   $f$ is holomorphic on $P\times D$ and of class $\cC^r$
on $P\times \bar D$, 
\item[(ii)] the maps $f(0,\cdotp)$ and $f(t,\cdotp)$ agree 
on $\sigma$ for all $t\in P$, and
\item[(iii)] if $z\in \bar D\bs \sigma$ and $t\in P$ 
then $f(t,z)\in X_{\reg}$ and 
\[
	\di_t f(t,z) \colon T_t \C^m \to T_{f(t,z)} X
\]
is surjective (the {\em domination property}).
\end{itemize}
We call $f_0=f(0,\cdotp)$ the {\em core} 
(or {\em central}\/) map of the spray $f$. 
\end{definition}

The following lemma is a special case of 
\cite[Lemma 4.2]{BDF1}.

\begin{lemma}
\label{sprays-exist}
{\em (Existence of sprays)} 
Assume that $\ell$, $r$, $D$, $\sigma$ and $X$ are in
accordance with Definition \ref{Spray}.  Given a map 
$f_0\colon \bar D\to X$ of class $\cA^r(D)$ such that the set
$\{z\in \bar D \colon f_0(z) \in X_{\sing}\}$ 
is contained in $\sigma$,
there exists a spray $f\colon P\times \bar D\to X$ of class $\cA^r(D)$, 
with the exceptional set $\sigma$, such that $f(0,\cdotp)=f_0$.  
\end{lemma}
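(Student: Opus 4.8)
This is a special case of \cite[Lemma~4.2]{BDF1}; the plan is to build the spray locally over $\bar D$ and then patch, exploiting that $D$ is a relatively compact domain in $\C$, hence Stein. First I would fix a polynomial $a\in\C[z]$ vanishing exactly on $\sigma$, say $a(z)=\prod_{z_j\in\sigma}(z-z_j)$, so that $a$ vanishes on $\sigma$ and has no other zero on $\bar D$. Since $f_0(\bar D)$ is compact and every point of a complex space has a Stein neighbourhood, I would cover $\bar D$ by finitely many relatively open sets $W_1,\dots,W_k$ with each $\overline{f_0(W_j)}$ contained in a Stein open set $\Omega_j\subset X$ realised as a closed complex subvariety of a domain in some $\C^{N_j}$; here the members of the cover that meet $\sigma$ are small pairwise disjoint discs centred at its points, and all other $W_j$ are disjoint from $\sigma$, hence satisfy $f_0(W_j)\subset X_{\reg}$ because $\{z\in\bar D\colon f_0(z)\in X_{\sing}\}\subset\sigma$. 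Cartan's Theorem~A applied to the coherent tangent sheaf of the Stein space $\Omega_j$ then provides finitely many holomorphic vector fields $v_{j,1},\dots,v_{j,m_j}$ on $\Omega_j$, tangent to $X$, whose values span the tangent space at every point of $\Omega_j\cap X_{\reg}$; being tangent to $X$, their local time-$w$ flows $\phi^w_{v_{j,i}}$ are biholomorphisms between open subsets of $\Omega_j$ for $|w|$ small, and hence preserve $\Omega_j\cap X_{\reg}$.

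Next I would define, over each $W_j$, a local spray by composing reparametrised flows,
\[
  g_j(t,z)=\phi^{\,t_{m_j}a(z)}_{v_{j,m_j}}\circ\cdots\circ\phi^{\,t_1 a(z)}_{v_{j,1}}\bigl(f_0(z)\bigr),
  \qquad z\in W_j,\quad t=(t_1,\dots,t_{m_j})\in P_j\subset\C^{m_j},
\]
with $P_j$ a small neighbourhood of the origin. Then $g_j(0,\cdot)=f_0$; $g_j(t,\cdot)=f_0$ on $\sigma\cap W_j$ because $a$ vanishes there; $g_j$ is of class $\cA^r$ on $P_j\times W_j$ since $f_0\in\cA^r(D)$ while $a$ and the flows are holomorphic; and $\di_t g_j(0,z)$ equals $a(z)$ times the evaluation $\bigl[v_{j,1}(f_0(z))\mid\cdots\mid v_{j,m_j}(f_0(z))\bigr]$, composed with invertible flow differentials, which is surjective onto $T_{f_0(z)}X$ for every $z\in W_j\bs\sigma$. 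After shrinking $P_j$, the domination property holds for all $t\in P_j$.

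Finally I would patch $g_1,\dots,g_k$ into a single spray $f\colon P\times\bar D\to X$, adding one member of the cover at a time. Ordering the cover so that each partial union together with the next piece forms a Cartan pair of planar domains, at each step the transition over the overlap between the spray constructed so far and the next $g_j$ is, after shrinking parameters, a fibre-preserving biholomorphism of class $\cA^r$ uniformly close to the identity. I would split it as $\gamma\circ\gamma'^{-1}$ with $\gamma,\gamma'$ defined over the respective pieces and close to the identity, using the Cartan decomposition of maps close to the identity on a Cartan pair, i.e. by solving a $\dibar$-problem on the Stein domain $D$ with $\cC^r$ estimates up to $bD$. Composing the two sprays with $\gamma'$, respectively $\gamma$, makes them agree over the overlap, and the merged map is again a spray because domination and the normalisations at $0$ and on $\sigma$ survive small perturbations; iterating over the whole cover yields $f$, with $P$ a neighbourhood of the origin in $\C^{m_1+\cdots+m_k}$.

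I expect the last step to be the main obstacle: carrying out the splitting with the prescribed boundary regularity up to $bD$ while preserving the domination property and the normalisations — this is exactly where $bD\in\cC^\ell$ with $\ell\ge\max\{2,r\}$ and the Steinness of $D$ enter. A secondary but essential point is the bookkeeping near $\sigma$: the vanishing of $a$ on $\sigma$ must be used throughout to guarantee that the resulting spray meets $X_{\sing}$ only at points of $\sigma$, as required by Definition~\ref{Spray}.
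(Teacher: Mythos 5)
Your local construction over each $W_j$ (Stein chart, vector fields generating the tangent sheaf via Cartan's Theorem A, flows reparametrised by a polynomial $a$ vanishing on $\sigma$) is fine, but the patching step is a genuine gap, and it is precisely the step the cited proof avoids. Proposition \ref{gluing-sprays} glues a dominating spray $f$ over $\bar D_0$ with a spray $f'$ over $\bar D_1$ only when $f'$ is a sufficiently small $\cC^r$-perturbation of $f$ on $P_0\times \bar D_{0,1}$ (same parameter space, closeness threshold dictated by $f$); only then does the transition map $\gamma$ with $f=f'\circ\gamma$ exist and lie close to the identity, which is what makes the splitting $\gamma=\beta\circ\alpha^{-1}$ solvable. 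Two independently built local sprays $g_j$, $g_{j'}$ sharing the core $f_0$ are not in this regime: their parameter dimensions $m_j$, $m_{j'}$ generally differ, and even after padding, the fibre map relating them is at first order an arbitrary linear change of the $t$-variable, not a map near the identity; shrinking or rescaling the parameters does not help, because the admissible closeness is measured against the spray you already have. Moreover, even if a gluing went through, Proposition \ref{gluing-sprays} only returns a core that \emph{approximates} $f_0$ (agreeing with it just on the exceptional set), whereas the lemma demands $f(0,\cdotp)=f_0$ exactly; your plan never recovers this equality, and recentring the glued spray along a section $t=\tau(z)$ is problematic precisely at $\sigma$, where domination fails.

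The proof in \cite{BDF1} takes a different, gluing-free route: one first shows that the graph $\{(z,f_0(z))\colon z\in\bar D\}$ admits an open \emph{Stein} neighborhood $\Omega$ in $\C\times X$ (this Stein neighborhood theorem for graphs of $\cA^r$ maps is the real content), and then applies your flow argument \emph{globally} on $\Omega$: finitely many holomorphic vector fields on $\Omega$, tangent to the fibres $\{z\}\times X$, span the tangent space at all relevant regular points of the graph; composing their flows with times $t_i\, a(z)$ yields the spray with core exactly $f_0$, exceptional set $\sigma$, and values in $X_{\reg}$ off $\sigma$ (vector fields on a complex space are tangent to the singular stratum, as you note). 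If you want to keep your covering approach, you would have to replace the appeal to Proposition \ref{gluing-sprays} by an argument that produces vector fields defined along the whole graph at once — which is exactly what the Stein neighborhood of the graph provides — so the cleanest fix is to quote that theorem rather than to patch local sprays.
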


\begin{definition}
\label{Cartan-pair}
Let $\ell\ge 2$ be an integer.
A pair of open subsets $D_0,D_1 \Subset \C$ 
is said to be a {\em Cartan pair of class $\cC^\ell$} if 
\begin{itemize}
\item[(i)]  $D_0$, $D_1$, $D=D_0\cup D_1$ and $D_{0,1}=D_0\cap D_1$ 
are domains with $\cC^\ell$ smooth boundaries, and 
\item[(ii)] 
$\overline {D_0\backslash D_1} \cap \overline {D_1\backslash D_0}=\emptyset$ 
(the separation property). 
\end{itemize}
\end{definition}

The following is the main result on gluing sprays in the particular
situation that we are considering 
(see \cite[Proposition 4.3]{BDF1} or \cite[Lemma 3.2]{FFAsian}).
This is in fact a solution of a nonlinear Cousin-I problem.

%%%%%%%%%%%%%%%%%%%%%%%%%%%%%%%%%%%%%
%																		%	
%  GLUING SPRAYS										%
%																		%
%%%%%%%%%%%%%%%%%%%%%%%%%%%%%%%%%%%%%

\begin{proposition}
\label{gluing-sprays}
{\em (Gluing sprays)} 
Let $(D_0,D_1)$ be a Cartan pair of class $\cC^\ell$ 
$(\ell\ge 2)$ in $\C$ (Def.\ \ref{Cartan-pair}). 
Set $D=D_0\cup D_1$, $D_{0,1}=D_0\cap D_1$. Let
$X$ be a complex space. Given an integer $r\in\{0,1,\ldots, \ell\}$  
and a spray $f\colon P_0\times \bar D_0 \to X$ of class $\cA^r(D_0)$ 
with the exceptional set $\sigma$ such that 
$\sigma \cap \bar D_{0,1}=\emptyset$, there is an open 
set $P \ss P_0\subset \C^m$ containing $0\in\C^m$ 
and satisfying the following. 

For every spray $f' \colon P_0\times \bar D_1 \to X$
of class $\cA^r(D_1)$, with the exceptional set $\sigma'$
such that $f'$ is sufficiently $\cC^r$ close to $f$ on 
$P_0\times \bar D_{0,1}$ and $\sigma'\cap \bar D_{0,1}=\emptyset$, 
there exists a spray $F \colon P\times \bar D \to X$ 
of class $\cA^r(D)$, with the exceptional set 
$\sigma\cup \sigma'$, enjoying the following properties:
\begin{itemize}
\item[(i)] the restriction $F\colon P\times \bar D_0\to X$ 
is close to $f \colon P\times \bar D_0 \to X$ in the $\cC^r$-topology
(depending on the $\cC^r$-distance of $f$ and $f'$ on $P_0\times \bar D_{0,1}$),
\item[(ii)] the core map $F_0=F(0,\cdotp)$ is homotopic to 
$f_0=f(0,\cdotp)$ on $\bar D_0$, 
and $F_0$ is homotopic to $f'_0=f'(0,\cdotp)$ on $\bar D_1$, 
\item[(iii)] $F_0$ agrees with $f_0$ on $\sigma$, 
and it agrees with $f'_0$  on $\sigma'$, and 
\item[\rm (iv)] 
$F(t,z)\in\{f'(s,z)\colon s \in P_0\}$ for each $t\in P$ and
$z\in \bar D_1$.
\end{itemize}
\end{proposition}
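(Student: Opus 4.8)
The plan is to reduce the statement to the gluing theorem for sprays over Cartan pairs in complex \emph{manifolds}, which is essentially \cite[Proposition 4.3]{BDF1} or \cite[Lemma 3.2]{FFAsian}, and then account for the new features introduced by singularities of $X$, namely the exceptional sets $\sigma$ and $\sigma'$. The point of requiring $\sigma\cap\bar D_{0,1}=\emptyset$ and $\sigma'\cap\bar D_{0,1}=\emptyset$ is precisely that on the overlap $\bar D_{0,1}$ both sprays take values in the complex manifold $X_{\reg}$ and satisfy the domination property there, so the classical gluing machinery applies verbatim on that region; the exceptional points simply ``ride along'' because they lie in $\bar D_0\bs\bar D_{0,1}$ (hence stay in the part glued from $f$) or in $\bar D_1\bs\bar D_{0,1}$ (hence stay in the part glued from $f'$), and the gluing map is close to the identity there.

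First I would record the scheme of the classical argument. Shrinking $P_0$ if necessary, one assumes $f(P_0\times\bar D_{0,1})$ is contained in a neighborhood of $f_0(\bar D_{0,1})$ in $X_{\reg}$ on which one has holomorphic coordinate charts, or more conveniently one works with the ``difference'' of the two sprays. If $f'$ is $\cC^r$-close to $f$ on $P_0\times\bar D_{0,1}$, then $\gamma(t,z):=$ ``$f'(\cdot,z)^{-1}\circ f(t,z)$'', interpreted via the local submersion $t\mapsto f'(t,z)$, defines a map from $P\times\bar D_{0,1}$ into a neighborhood of $0$ in the parameter space, close to the base point. One then solves a Cousin-type splitting problem: write $\gamma=\beta\cdot\alpha^{-1}$ (multiplicatively, in the appropriate group of maps) with $\alpha$ of class $\cA^r$ on $\bar D_0$ and $\beta$ of class $\cA^r$ on $\bar D_1$, both close to the identity, using the bounded solution operator for $\dibar$ on the Cartan pair (this is where the $\cC^\ell$-smoothness and the separation property (ii) of the Cartan pair enter, to get the splitting with estimates and to control supports). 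Finally one sets $F(t,z)=f(\alpha(t,z),z)$ on $\bar D_0$ and $F(t,z)=f'(\beta(t,z),z)$ on $\bar D_1$; the Cousin relation makes these agree on $\bar D_{0,1}$, so $F$ is a well-defined spray of class $\cA^r(D)$ on $\bar D$, and by construction it is close to $f$ on $\bar D_0$, which gives (i). Property (iv) is immediate since on $\bar D_1$ we have $F(t,z)=f'(\beta(t,z),z)\in\{f'(s,z):s\in P_0\}$.

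Next I would check the three genuinely ``singular'' points. For (iii): on $\sigma$ the parameter perturbation $\alpha$ can be arranged to fix the base point — in fact $\sigma\subset\bar D_0\bs\bar D_{0,1}$, so near $\sigma$ we have $F=f(\alpha,\cdot)$ with $\alpha$ of class $\cA^r$ on $\bar D_0$; since $f(t,\cdot)$ agrees with $f_0$ on $\sigma$ for all parameters $t$ by Definition \ref{Spray}(ii), automatically $F_0=F(0,\cdot)$ agrees with $f_0$ on $\sigma$ regardless of the value of $\alpha(0,\cdot)$ there. Symmetrically, $\sigma'\subset\bar D_1\bs\bar D_{0,1}$ (after shrinking, using $\sigma'\cap\bar D_{0,1}=\emptyset$), and there $F=f'(\beta,\cdot)$ agrees with $f'_0$ on $\sigma'$ by Definition \ref{Spray}(ii) applied to $f'$. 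This also shows that $F$ has exceptional set exactly $\sigma\cup\sigma'$ and that away from $\sigma\cup\sigma'$ the map $F$ takes values in $X_{\reg}$ with dominating $\di_t F$ — on $\bar D_0$ because $F$ is $\cC^r$-close to the spray $f$ (closeness preserves surjectivity of $\di_t$, an open condition, away from $\sigma$), and on $\bar D_1$ because $F(t,\cdot)=f'(\beta(t,\cdot),\cdot)$ is a reparametrization of the spray $f'$ by the near-identity map $\beta$, hence still dominating away from $\sigma'$. For (ii): the straight-line homotopy $s\mapsto f(s\,\alpha(0,\cdot),\cdot)$ (or rather, $f$ along a path in $P_0$ joining $0$ to $\alpha(0,\cdot)$, which exists since $\alpha$ is close to the base point and $P_0$ can be taken to be a ball) connects $f_0$ to $F_0$ on $\bar D_0$; similarly on $\bar D_1$ via $\beta$.

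The main obstacle is the Cousin splitting with estimates in the nonlinear/parametric setting: one must solve $\gamma=\beta\alpha^{-1}$ in a group of maps with values in a neighborhood of the base point, keeping $\alpha,\beta$ in the classes $\cA^r(\bar D_0)$, $\cA^r(\bar D_1)$ and uniformly close to the identity, with the smallness controlled by the $\cC^r$-distance of $f$ and $f'$ on the overlap. This is handled by linearizing, applying a bounded linear solution operator for the Cousin-I ($\dibar$) problem on the Cartan pair — whose existence uses (i) the smooth boundaries and (ii) the separation property, giving a continuous right inverse on $\cC^r$-type spaces — and then running a standard fixed-point / implicit-function iteration to pass from the linear to the nonlinear solution. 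Since all of this is carried out inside $X_{\reg}$ over $\bar D_{0,1}$, where $X$ behaves like a manifold, the argument is identical to the one in \cite{BDF1,FFAsian}, and I would simply cite those sources for this core analytic step rather than reproduce it, presenting instead in full detail only the bookkeeping with the exceptional sets $\sigma,\sigma'$ described above.
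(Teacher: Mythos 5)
Your proposal follows essentially the same route as the paper: construct a transition map $\gamma$ between the two sprays over $\bar D_{0,1}$ (implicit function theorem plus a $\dibar$/Cousin-type solution operator with $\cC^r$ estimates on the Cartan pair), split $\gamma=\beta\circ\alpha^{-1}$ with $\alpha,\beta$ near the identity over $\bar D_0$, $\bar D_1$, and amalgamate $F=f\circ\alpha$ on $\bar D_0$ with $F=f'\circ\beta$ on $\bar D_1$, citing \cite{BDF1,FFAsian} for the core analytic step — exactly as the paper does, which likewise only sketches this and refers to those sources. Your additional bookkeeping for $\sigma,\sigma'$ and properties (i)--(iv) is correct and fills in what the paper dismisses as easily verified.
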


Here is a brief outline of the proof. 
The first step is to find a domain $P'\subset \C^m$ such that 
$0\in P'\Subset P_0$, and a transition map 
between  the sprays $f$ and $f'$, that is, a $\cC^r$ map 
\[
	\gamma\colon P'\times \bar D_{0,1} \to P_0 \times \bar D_{0,1},
	\quad \gamma(t,z)=\bigl(c(t,z),z\bigr)
\]
that is holomorphic in $P'\times D_{0,1}$ and is $\cC^r$ close to the 
identity map (depending on the $\cC^r$-distance between
$f$ and $f'$ over $P_0\times \bar D_{0,1}$), such that 
\[
	f=f'\circ \gamma \quad{\rm  on}\ P'\times \bar D_{0,1}.
\]
This is an application of both the implicit function theorem
and the fact that Cartan's Theorem B holds for holomorphic
vector bundles on domains in $\C$ that are 
smooth of class $\cC^r$ up to the boundary.
The key step is to split the map $\gamma$ in the form
\[
	\gamma= \beta \circ \alpha^{-1}
\]
where $\alpha(t,z)=\bigl(a(t,z),z\bigr)$ and 
$\beta(t,z)=\bigl(b(t,z),z\bigr)$ are maps with similar
properties over $P\times \bar D_0$ and $P\times \bar D_1$,
respectively, for some slightly smaller parameter set 
$0\in P\ss P'$. This splitting is accomplished by nonlinear 
operators whose linearization involves a solution operator 
for the $\dibar$-equation with $\cC^r$ estimates
on $D=D_0\cup D_1$. The final step is to observe that
over $P\times \bar D_{0,1}$ we have 
\[
	f=f'\circ\gamma =f'\circ\beta\circ\alpha^{-1}
	\Longrightarrow  f\circ \alpha=f'\circ \beta.
\]
Hence the two sides amalgamate into a spray $F$ over $\bar D$,
and it is easily verified that $F$ satisfies Proposition
\ref{gluing-sprays}.

%%%%%%%%%%%%%%%%%%%%%%%%%%%%%%%%%%%%%%%%%%%%%%%%%%%%%%%%%%%%%%%%%%%%%
%                                                   								%
%																								    								%
%     A RIEMANN-HILBERT PROBLEM                      					      %
% 										     																					%
%																								    								%
%																								    								%
%%%%%%%%%%%%%%%%%%%%%%%%%%%%%%%%%%%%%%%%%%%%%%%%%%%%%%%%%%%%%%%%%%%%%
%
\section{A Riemann-Hilbert problem} 
\label{RH}
In this section we explain how to find an approximate solution 
of a Riemann-Hilbert problem with the control 
of the average of a given function on a boundary arc.
Results of this kind have been used by several authors; see e.g.\ 
\cite{Poletsky1991, Poletsky1993,Bu-Schachermayer, FGlobevnik1, FGlobevnik2}.

Recall that $\bT=b\,\D=\{\zeta\in\C\colon |\zeta|=1\}$. 
Given a measurable subset $I\subset\bT$ and a measurable function
$v\colon I\to \R$, $\int_I v(\E^{\I t})\,\d t$ 
will denote the integral over the set of points 
$t\in[0,2\pi]$ for which $\E^{\I t}\in I$.

\begin{lemma}
\label{RH1} 
Let $f\in\cA(\D,\C^n)$, and let $g\colon \bT\times \clD \to \C^n$ 
be a continuous map such that for each $\zeta\in \bT$ 
we have $g(\zeta,\cdotp)\in \cA(\D,X,f(\zeta))$.
Given numbers $\epsilon>0$ and $0<r<1$, an arc $I\subset \bT$,
and a continuous function $u\colon\C^n\to\R$, 
there are a number $r'\in [r,1)$ 
and a disc $h\in \cA(\D,\C^n,f(0))$ satisfying 
\begin{equation}
\label{small-increase}
	\int_{I}
    u\bigl( h(\E^{\I t})\bigr) 
    \, \frac{\d t}{2\pi} < 
	  \int^{2\pi}_0 \!\! \int_{I} 
       u\bigl(g(\E^{\I t},\E^{\I\theta})\bigr) 
       \frac{\d t}{2\pi} \frac{\d\theta}{2\pi} + \epsilon
\end{equation}
and also the following properties:
\begin{itemize}
\item[\rm (i)] for any $\zeta\in \bT$ we have 
$\dist \bigl( h(\zeta),g(\zeta,\bT)\bigr) < \epsilon$, 
\item[\rm (ii)] for any $\zeta\in \bT$ and $\rho\in [r',1]$ 
we have $\dist \bigl( h(\rho \zeta),g(\zeta,\clD)\bigr) < \epsilon$,
\item[\rm (iii)] for any $|\zeta|\le r'$ we have 
$|h(\zeta)-f(\zeta)| < \epsilon$, and
\item[\rm (iv)]
if $g(\zeta,\cdotp)=f(\zeta)$ is the constant disc
for all $\zeta\in \bT\setminus J$, where $J\subset \bT$
is an arc containing $\ol I$, then we can choose $h$ 
such that $|h-f|<\epsilon$ holds outside any given neighborhood 
of $J$ in $\clD$. 
\end{itemize}
\end{lemma}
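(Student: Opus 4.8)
The plan is to construct the disc $h$ by a classical Riemann--Hilbert / gluing argument, following the scheme used by Poletsky, Bu--Schachermayer and others, but being careful about the parameter $r'$ and the localization in (iv). First I would partition the arc $I$ into finitely many small subarcs $I_1,\dots,I_m$ so that on each $I_k$ the map $\zeta\mapsto g(\zeta,\cdot)$ varies little in the sup-norm over $\clD$; by uniform continuity of $g$ on the compact set $\bT\times\clD$ this is possible. On $I_k$ I fix a representative point $\zeta_k$ and work with the single disc $g_k:=g(\zeta_k,\cdot)\in\cA(\D,\C^n)$, which satisfies $g_k(0)=f(\zeta_k)\approx f(\zeta)$ for $\zeta\in I_k$. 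The idea is to ``attach'' to $f$, over each $I_k$, a reparametrized copy of $g_k$ so that the boundary values of $h$ over $I_k$ trace out (approximately) the boundary circle $g_k(\bT)$, while over $\bT\setminus I$ the new disc stays close to $f$.

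Concretely, for each $k$ I would choose a continuous $\chi_k\colon\bT\to\clD$ supported in a slightly larger subarc $I_k'\supset I_k$, with $\chi_k\equiv 0$ off $I_k'$ and $|\chi_k|\equiv 1$ (i.e.\ $\chi_k$ maps into $\bT$) on most of $I_k$; such a map can be realized as the boundary values of a holomorphic function $\clD\to\clD$ (a finite Blaschke product composed with a cutoff, or the standard construction pushing a subarc onto almost all of $\bT$). Then set, informally,
\[
 h(\zeta) \;=\; f(\zeta) \;+\; \sum_{k=1}^m \bigl(\widetilde g_k(\chi_k(\zeta)) - f(\zeta)\bigr)\,\psi_k(\zeta),
\]
where $\widetilde g_k$ is a slight holomorphic perturbation of $g_k$ interpolating $g_k(0)=f(\zeta_k)$ but adjusted so that $\widetilde g_k(\chi_k(0)) - f(0)$ summed over $k$ vanishes (this is a finite-dimensional correction since $\chi_k(0)\in\D$ is an interior point, and it can be absorbed because evaluation at an interior point is an open map on holomorphic discs); the $\psi_k$ are holomorphic ``bump'' factors near $I_k'$. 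The point is that the boundary integral $\int_I u(h(\E^{\I t}))\,\d t/2\pi$ splits as $\sum_k\int_{I_k}u(\widetilde g_k(\chi_k(\E^{\I t})))\,\d t/2\pi$ up to an $O(\epsilon)$ error coming from the transition zones $I_k'\setminus I_k$ and from $u\circ f$ being bounded; and since $\chi_k$ pushes almost all of $I_k$ onto $\bT$, by the change-of-variables / equidistribution property of inner functions $\int_{I_k}u(g_k(\chi_k(\E^{\I t})))\,\d t/2\pi$ is close to $\frac{|I_k|}{2\pi}\int_0^{2\pi}u(g_k(\E^{\I\theta}))\,\d\theta/2\pi$, which in turn (by the small oscillation of $g$ over $I_k$) is close to $\int_{\E^{\I t}\in I_k}\int_0^{2\pi}u(g(\E^{\I t},\E^{\I\theta}))\,\d\theta\,\d t/(2\pi)^2$. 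Summing over $k$ gives \eqref{small-increase}. Properties (i) and (ii) follow because for $\zeta$ near $\bT$ in a sector over $I_k$, $h(\zeta)$ is (up to $\epsilon$) a value $\widetilde g_k(\chi_k(\zeta))$ with $\chi_k(\zeta)\in\clD$, hence lies within $\epsilon$ of $g(\zeta,\clD)$, while on $\bT$ itself $\chi_k(\E^{\I t})\in\bT$ for a.e.\ point of $I_k$ so $h$ is $\epsilon$-close to $g(\zeta,\bT)$ — and one chooses $r'<1$ close enough that $|\chi_k(\rho\zeta)|$ is still near $1$ for $\rho\in[r',1]$ and $\zeta\in I_k$; property (iii) holds because all the bump factors $\psi_k$ and the terms $\widetilde g_k(\chi_k(\zeta))-f(\zeta)$ are small on the disc $|\zeta|\le r'$ once $r'$ is chosen appropriately (here one uses that $\chi_k(0)$ is an interior point and $\psi_k$ can be made small on a fixed interior disc). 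For (iv): if $g(\zeta,\cdot)\equiv f(\zeta)$ outside an arc $J\supset\ol I$, then only the subarcs $I_k'$ meeting the support matter, and all of them can be taken inside $J$; choosing the $\psi_k$ supported near $J$ then makes $h=f$ outside any prescribed neighborhood of $J$.

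The main obstacle I expect is the bookkeeping that makes the error genuinely $O(\epsilon)$ uniformly: one must simultaneously control (a) the oscillation of $u\circ g$ and of $f$ over the transition zones $I_k'\setminus I_k$ (handled by taking $|I_k'\setminus I_k|$ small relative to the modulus of continuity of $u$ and the chosen subdivision, and using that $u$ is continuous hence bounded on the relevant compact image set), (b) the equidistribution estimate for the inner functions $\chi_k$ — i.e.\ that $\int_{I_k}\phi(\chi_k(\E^{\I t}))\,\d t \approx \frac{|I_k|}{2\pi}\int_\bT\phi$ for continuous $\phi$ on $\bT$ — which requires choosing $\chi_k$ with enough ``mixing'' (a Blaschke product of sufficiently high degree), and (c) the finite-dimensional correction ensuring $h(0)=f(0)$ exactly, which perturbs the $\widetilde g_k$ and must itself be shown to contribute only $O(\epsilon)$ to all the estimates — this is where one leans on the openness of the evaluation map $\cA(\D,\C^n)\ni g\mapsto g(0)$ together with $g_k(0)=f(\zeta_k)$ being near the common value $f(0)$ up to $O(\epsilon)$, so the required correction is $O(\epsilon)$ in norm. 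Once these three ingredients are arranged in the right order — first fix the subdivision from the modulus of continuity, then pick the $\chi_k$ and $r'$, then the $\psi_k$, then the correction — verifying \eqref{small-increase} and (i)--(iv) is routine.
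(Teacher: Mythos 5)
Your construction has a genuine structural gap: it cannot deliver property (i) of the lemma. Property (i) requires $\dist\bigl(h(\zeta),g(\zeta,\bT)\bigr)<\epsilon$ for \emph{every} $\zeta\in\bT$, and $g$ is given on all of $\bT\times\clD$, not only over the arc $I$. Your $h$ is designed to stay close to $f$ on $\bT\setminus\bigcup_k I_k'$, but there $f(\zeta)=g(\zeta,0)$ is the \emph{center} of the disc $g(\zeta,\cdotp)$, which in general is nowhere near its boundary circle $g(\zeta,\bT)$; the same failure occurs on the transition arcs $I_k'\setminus I_k$, where $\chi_k$ takes interior values and $h$ is near an interior value of the disc. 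So any scheme that attaches discs only over $I$ and interpolates with cutoffs produces boundary values of $h$ that violate (i) off $I$ and in the transition zones. There is also a concretely false sub-step: a function in the disc algebra whose boundary values vanish on an arc is identically zero, so your holomorphic $\chi_k$ with $\chi_k\equiv 0$ off $I_k'$ and $|\chi_k|\equiv 1$ on most of $I_k$ does not exist, nor do holomorphic bump factors $\psi_k$ ``supported'' near $I_k'$; these can only be replaced by outer-function approximations, which further erodes the exact bookkeeping you rely on. Finally, the equidistribution estimate for your wrapping maps and the finite-dimensional correction forcing $h(0)=f(0)$ are asserted rather than proved, and neither is routine as stated.

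For comparison, the paper's proof avoids every one of these issues by a global ``fast rotation'' device with no cutoffs: write $g(\zeta,z)=f(\zeta)+\lambda(\zeta,z)$ with $\lambda(\zeta,0)=0$, approximate $\lambda$ uniformly on $\bT\times\clD$ by a Laurent polynomial $\wt\lambda(\zeta,z)=\zeta^{-m}\sum_{j\ge 1}A_j(\zeta)z^j$, and set $h(\zeta)=f(\zeta)+\wt\lambda(\zeta,c\zeta^k)$ with $k>m$ large and $c=\E^{\I\phi}\in\bT$. This $h$ is holomorphic, satisfies $h(0)=f(0)$ automatically (no centering correction), and its boundary value at $\E^{\I t}$ is within $\epsilon$ of $g\bigl(\E^{\I t},\E^{\I(\phi+kt)}\bigr)$ for \emph{every} $t$, which gives (i) and (ii) on the whole circle; (iii) follows for large $k$, and (\ref{small-increase}) follows from rotation invariance of $\d\theta$ together with the mean value theorem in the parameter $\phi$, so no equidistribution lemma is needed. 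To salvage your approach you would either have to prove only the weaker form of (i) actually used in the application (where $g(\zeta,\cdotp)$ is constant off $J$), or abandon the partition-and-glue design in favor of a construction of this global type.
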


\begin{proof} 
Write 
\[
	g(\zeta,z)=f(\zeta)+\lambda(\zeta,z), \qquad \zeta\in\bT,\ z\in \clD,
\]
where $\lambda(\zeta,z)$ is continuous on 
$(\zeta,z)\in \bT\times \clD$, and for every fixed 
$\zeta\in \bT$ the function $\clD \ni z\mapsto \lambda(\zeta,z)$ 
is holomorphic on $\D$ and satisfies $\lambda(\zeta,0)=0$.
We can approximate $\lambda$ uniformly on $\bT\times \clD$ 
by Laurent polynomials of the form
\[
	\wt\lambda(\zeta,z) = \frac{1}{\zeta^m} \sum_{j=1}^N A_j(\zeta) z^j
	= \frac{z}{\zeta^m} \sum_{j=1}^N A_j(\zeta) z^{j-1}
\]
with polynomial coefficients $A_j(\zeta)$.
Hence we can choose a map $\wt \lambda$ as above
and a number $r'\in [r,1)$ such that
\begin{equation}
\label{est:lambda}
	 \big| \wt\lambda(\rho\,\E^{\I t},z) - \lambda(\E^{\I t},z)\big| 
	 < \frac{\epsilon}{2},
	 \qquad t\in\R,\ r'\le \rho \le 1,\ |z|\le 1
\end{equation}
and
\begin{equation}
\label{est:f}
	\big| f(\rho \E^{\I t}) - f(\E^{\I t}) \big| < \frac{\epsilon}{2},
	 \qquad t\in\R,\ r'\le \rho \le 1.
\end{equation}
Choose an integer $k>m$ and a number $c=\E^{\I \phi} \in \bT$ and set 
\[
	h_{k}(\zeta,c) = f(\zeta) + \wt\lambda(\zeta, c\zeta^k)
	          =f(\zeta) + c\, \zeta^{k-m} 
	          \sum_{j=1}^N A_j(\zeta) \left(c\zeta^k\right)^{j-1},
	          \quad |\zeta|\le 1.
\]
This is an analytic disc in $\C^n$ satisfying  
$h_{k}(0,c)=f(0)$, so it belongs to $\cA(\D,\C^n,f(0))$. 
For $\zeta=\E^{\I t}\in \bT$ we have
\begin{equation}
\label{eq:h-closeto-g}
	h_k\bigl(\E^{\I t},c\bigr) = 	f\bigl(\E^{\I t}\bigr) + 
	\wt\lambda\bigl(\E^{\I t}, \E^{\I\phi} \E^{k\I t}\bigr)
	\approx g\bigl(\E^{\I t},\E^{\I(\phi+ kt)}\bigr),
\end{equation} 
and hence property (i) holds in view of (\ref{est:lambda}).
Similarly, if $r'\le \rho \le 1$ then 
\begin{eqnarray}
	\left| h_k\bigl( \rho \E^{\I t},c \bigr) - 
	g\bigl(\E^{\I t}, c \rho^k \E^{\I kt}\bigr) \right|  
	&\le& 
	\left| \wt\lambda\bigl(\rho \E^{\I t}, c\,\rho^k \E^{\I kt}\bigr) 
	- \lambda\bigl(\E^{\I t}, c\,\rho^k \E^{\I kt}\bigr) \right| \cr
	&& + \ \big| f(\rho \E^{\I t}) - f(\E^{\I t}) \big|  \cr
	&<& \epsilon
\end{eqnarray}
by (\ref{est:lambda}) and (\ref{est:f}), so (ii) holds as well.

If $k\to +\infty$ then $h_k(\zeta,\cdotp)\to f(\zeta)$ 
uniformly on the set $\{|\zeta|\le r'\}\times \bT$, so we  
get condition (iii) if $k$ is chosen big enough.
Property (iv) is a consequence of (ii) and (iii).

It remains to show that the inequality (\ref{small-increase})
can be achieved by a suitable choice of the 
number $c=\E^{\I \phi}\in \bT$. We clearly have
\[
	\int^{2\pi}_0 \!\!\! \int_{I}
       u\bigl(g(\E^{\I t},\E^{\I\theta})\bigr) %\psi(\E^{\I t}) 
       \,\frac{\d t}{2\pi} \frac{\d\theta}{2\pi} 
  	= \int^{2\pi}_0 \!\!\! \int_I
       u\bigl(g(\E^{\I t},\E^{\I(\theta + kt)})\bigr)
       %\psi(\E^{\I t}) 
       \, \frac{\d t}{2\pi} \frac{\d\theta}{2\pi}.
\]
By the mean value theorem there exists $\phi\in [0,2\pi)$
such that this equals 
\[
	\int_I u\bigl(g(\E^{\I t},\E^{\I(\phi + kt)}) \bigr)
	\,\frac{\d t}{2\pi}.
\]
By (\ref{eq:h-closeto-g}) this number differs by at most 
$\epsilon$ from 
$
\int^{2\pi}_0 u\bigl(h_k(\E^{\I t},\E^{\I \phi})\bigr) 
% \psi(\E^{\I t})
\,\frac{\d t}{2\pi}
$ 
if $k$ is chosen big enough. This completes the proof.
\end{proof}

%%%%%%%%%%%%%%%%%%%%%%%%%%%%%%%%%%%%%%%%%%%%%%%%%%%%%%%%%%%%%%%%%%%%%
%                                                   								%
%																								    								%
%  PROOF OF THEOREM POLETSKY-ROSAY                       					  %
% 										     																					%
%																								    								%
%																								    								%
%%%%%%%%%%%%%%%%%%%%%%%%%%%%%%%%%%%%%%%%%%%%%%%%%%%%%%%%%%%%%%%%%%%%%
%
\section{Proof of Theorem \ref{Poletsky-Rosay}}

{\bf Step 1.}
We reduce to the case when $X$ is a normal complex space. 

Recall that a function on a complex space $X$ is {\em weakly holomorphic} 
if it is holomorphic on the regular part $X_{\reg}$ and is 
locally bounded near each singular point.
A reduced complex space $X$ is said to be {\em normal}
if every weakly holomorphic function is in fact holomorphic;
it is {\em seminormal} if 
every continuous weakly holomorphic function on $X$ is holomorphic.
A holomorphic map $\pi\colon\widetilde X\to X$ of complex spaces
is called a {\em seminormalization} (or a  {\em maximalization}) 
of $X$ if $\widetilde X$ is a seminormal complex space and $\pi$
is a homeomorphism. We will use the following facts  
(see \cite{Remmert}):
\begin{itemize}
\item every reduced complex space admits a seminormalization,
\item every locally irreducible seminormal complex space is normal, and 
\item seminormalization is a functor; in particular, 
the lift $\pi^{-1}\circ f$ of any holomorphic disc $f\colon \D\to X$ 
is a holomorphic disc in $\widetilde X$.
\end{itemize}
This implies that if $\pi\colon \wt X\to X$ is a seminormalization
of $X$ then the composition $u\mapsto u\circ \pi$ 
induces an isomorphism from $PSH(X)$ onto $PSH(\widetilde X)$.

Assume now that Theorem \ref{Poletsky-Rosay} holds for normal complex spaces.
Let $X$ be a locally irreducible complex space and
$u\colon X\to \R\cup\{-\infty\}$ an upper semicontinuous function. 
Let $\pi\colon\widetilde X\to X$ be a seminormalization of $X$.
The function $\widetilde u=u\circ \pi$ is upper semicontinuous on $\widetilde X$. 
Since $\widetilde X$ is a normal complex space, 
the Poisson envelope $\widetilde v = \wh{\widetilde u}$ of $\wt u$
is plurisubharmonic on $\widetilde X$.
Each holomorphic disc $f\in \cA(\D,X)$ admits a holomorphic lifting 
$\widetilde f\in \cA(\D,\widetilde X)$ with $\pi\circ\widetilde f=f$,
and we clearly have $P_{\widetilde u}(\widetilde f)=P_u(f)$. 
This implies that $\widetilde v(\pi^{-1}(x))=\hat (x)$ 
and hence $\hat u$ is plurisubharmonic.

\smallskip
{\bf Step 2.}
We reduce to the case when the function 
$u\colon X\to\R$ is continuous and bounded from below.

Since $u\colon X\to \R\cup\{-\infty\}$ 
is upper semicontinuous, there is a decreasing sequence 
of continuous functions $u_1\ge u_2\ge \ldots\ge u$ 
such that $u=\lim_{k\to\infty} u_k$ pointwise in $X$. 
Replacing $u_k$ by $\max\{u_k,-k\}$ we may assume 
in addition that $u_k\ge -k$ on $X$; hence $\wh u_k\ge -k$ as well.
Assuming that the result holds for each $u_k$, 
the Poisson envelopes $\wh u_1\ge\wh u_2\ge \ldots$ 
form a decreasing sequence of plurisubharmonic functions,
and hence  $v=\lim_{k\to\infty} \wh u_k$
is also plurisubharmonic or identically $-\infty$.
Since $\wh u_k\le u_k$, it follows that $v\le u$.
For any point $x\in X$ and disc $f\in \cA(\D,X,x)$ 
we have  $v(x)\le P_v(f) \le P_u(f)\le P_{u_k}(f)$
by monotonicity. Taking the infimum over all such $f$ shows that
$v(x)\le \wh u(x)\le \wh u_k(x)$. Letting $k\to\infty$
gives $v(x)=\wh u(x)$, so $\wh u$ is plurisubharmonic
or $-\infty$.

\smallskip
{\bf Step 3.}
We show that the Poisson envelope $v=\wh u$ 
given by (\ref{eq:Poisson-funct}) is upper semicontinuous
on the regular locus $X_{\reg}$.
(We assume that $X$ is a reduced complex space. The reductions 
in Steps 1 and 2 will not be used here.)

Pick a point $x\in X_{\reg}$ and a number $\epsilon>0$. 
Assume first that $v(x)>-\infty$.
By the definition of $v$ there exists a disc 
$f_0\in \cA(\D,X,x)$ such that $v(x)\le P_u(f_0) < v(x)+\epsilon$.
By shrinking $\D$ slightly we may assume that $f_0(\bT)\subset X_{\reg}$;
hence the set $\sigma=\{\zeta\in\D \colon f_0(\zeta)\in X_{\sing}\}$
is finite and $0\notin\sigma$. 
We embed $f_0$ as the central map $f_0=f(0,\cdotp)$ in a 
spray of holomorphic discs $f\colon  P\times\clD\to X$ 
with the exceptional set $\sigma$, 
where $P$ is an open set in $\C^m$ containing the origin. 
(See Def.\ \ref{Spray} and Lemma \ref{sprays-exist};
on $X=\C^n$ we can simply use the family of translates $f_y=f_0+(y-x)$).
If $P'\Subset P$ is a small open neighborhood of $0\in \C^m$ then 
$P_u(f(t,\cdotp))< P_u(f_0)+\epsilon$ for each $t\in P'$ and hence 
\[
	v(f(t,0))\le P_u(f(t,\cdotp)) \le  P_u(f_0)+\epsilon < v(x)+2\epsilon.
\]
By the domination property the set 
$\{f(t,0)\colon t\in P'\}$ fills a neighborhood of the point 
$x=f(0)$ in $X$, so we see that $v$ is upper semicontinuous at $x$.
A similar argument works at points where $v(x)=-\infty$.

\smallskip
{\bf Step 4.}
In this main step of the proof we assume that 
$X$ is an irreducible normal complex space and that
$u\colon X\to\R$ is a continuous function which is 
bounded from below (see Steps 1 and 2). We shall 
prove that the Poisson envelope $v=\wh u$ given 
by (\ref{eq:Poisson-funct}) is plurisubharmonic on $X_{\reg}$. 
(Plurisubharmonicity on $X_{\sing}$ will be proved in Step 5.)

We need to show that for every point 
$x\in X_{\reg}$ and for every analytic disc 
$f\in \cA(\D,X,x)$  we have the submeanvalue property
\begin{equation}
\label{eq:submean}
	v(x) = v(f(0)) \le \int^{2\pi}_0 v(f(\E^{\I t}))\, \frac{\d t}{2\pi}.
\end{equation}
Since plurisubharmonicity is a local property, it is enough to 
consider small discs; hence we may assume that $f(\clD)\subset X_{\reg}$ and 
that $f$ is holomorphic on a larger disc $r_0\D$ for some $r_0>1$.

Pick a number $\epsilon>0$. Fix a point $\E^{\I t} \in \bT$.
By the definition of $v$ there exists an analytic disc 
$g_t = g(\E^{\I t},\cdotp) \in \cA(\D,X,f(\E^{\I t}))$ 
such that
\begin{equation}
\label{eq:suboptimal}
	v(f(\E^{\I t}))\le 
	\int_0^{2\pi} u\bigl(g(\E^{\I t},\E^{\I \theta})\bigr)\, \frac{\d \theta}{2\pi} 
	 < v(f(\E^{\I t})) +\epsilon.
\end{equation}
Since the set $\{\zeta \in \D\colon g_t(\zeta)\in X_{\sing}\}$ is discrete,
we can replace $g_t$ by the map $\zeta\mapsto g_t(r\zeta)$ for some 
$r<1$ very close to $1$ so that the new map still satisfies
(\ref{eq:suboptimal}) and $g_t(\bT)\subset X_{\reg}$. 
By Lemma \ref{sprays-exist} there is a domain $P\subset \C^m$
containing the origin and a holomorphic spray of discs 
$G\colon P\times \clD \to X$ with the
central map $G(0,\cdotp)=g_t$. 
(We use sprays of class $\cA^0(\D)$.)
Since $G(0,0)=g_t(0)=f(\E^{\I t})\in X_{\reg}$ 
and the spray $G$ is dominating, the set 
$G(P,0)=\{G(w,0)\colon w\in P\}$ 
is a neighborhood of the point $f(\E^{\I t})$ in $X$.
By the implicit mapping theorem there are a 
disc $D\subset r_0\D$ centered at the point $\E^{\I t}\in \bT$ 
and a holomorphic map $\varphi\colon D \to P$ such that 
\[	
	\varphi(\E^{\I t})= 0 \quad {\rm and}\quad 
	G(\varphi(\zeta),0)=f(\zeta),\quad \zeta\in D.
\]
Consider the continuous map $g\colon D\times \clD\to X$ defined by  
\[
	g(\zeta,z)=G(\varphi(\zeta),z), \qquad \zeta\in D,\ z\in\clD. 
\]
Note that $g$ is  holomorphic on $D\times\D$, and 
\[
   g(\E^{\I t},\cdotp)=G(0,\cdotp)=g_t; \qquad 
   g(\zeta,0)= f(\zeta),\quad \zeta\in D.
\]
Since $g(\zeta,\cdotp)$ is uniformly close to 
$g_t$ when $\zeta$ is close to $\E^{\I t}$,
it follows from (\ref{eq:suboptimal}) that 
there is a small arc $I\Subset \bT\cap D$ 
around the point $\E^{\I t}$ such that
\[
      \int^{2\pi}_0 \!\! \int_{I} 
       u\bigl(g(\E^{\I \eta},\E^{\I \theta})\bigr) \, 
       \frac{\d\eta}{2\pi} \frac{\d \theta}{2\pi} 
       \le 
       \int_I v\bigl(f(\E^{\I \eta})\bigr)\,\frac{\d\eta}{2\pi} + 
       \frac{|I|\,\epsilon}{2\pi}.     
\]

By repeating this construction at other points of $\bT$
we find finitely many pairs of arcs 
$I_j\ss I'_j \ss \bT\cap D_j$ $(j=1,\ldots,l)$, 
where $D_j$ is a disc contained in $r_0\D$, 
such that $\ol I'_j\cap \ol I'_k=\emptyset$ if $j\ne k$ and
the set $E=\bT\bs \cup_{j=1}^m I_j$ has 
arbitrarily small measure $|E|$, and 
holomorphic families of discs $g_j(\zeta,z)$ for $\zeta\in D_j$
and $z\in\clD$ such that 
\begin{equation}
\label{est1}
   \int^{2\pi}_0 \!\! \int_{I_j} 
       u\bigl(g_j(\E^{\I t},\E^{\I\theta})\bigr) \, 
      \frac{\d t}{2\pi} \frac{\d\theta}{2\pi} 
   < \int_{I_j} v\bigl(f(\E^{\I t})\bigr)\,
   \frac{\d t}{2\pi}
    + \frac{|I_j|\,\epsilon}{2\pi}.
\end{equation}

For each $j=1,\ldots,l$ we choose a smoothly bounded simply 
connected domain $\Delta_j \subset D_j\cap \D$ 
such that $\ol\Delta_j \subset D$, 
$\ol\Delta_j\cap \bT=\ol{I'_j}$, and  
$\ol \Delta_j\cap \ol\Delta_k=\emptyset$ when  
$1\le j\ne k\le l$. (The situation with $l=3$ 
is illustrated in Fig.\ \ref{Fig1}. The reader 
should keep in mind that the gaps between the segments $I_j$ have very small total length.
The role of the sets $D_0,D_1\subset \D$ with
$D_0\cup D_1=\D$ is explained in the proof of Lemma \ref{RH2} below.)

%
%
%  Figure:   Cartan pair
%
%
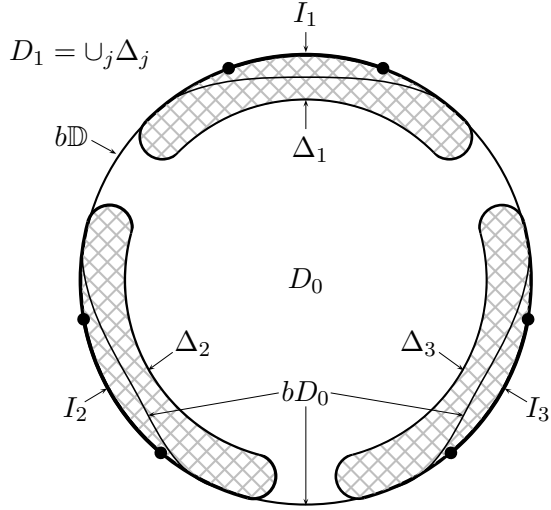
\begin{figure}[ht]
\psset{unit=0.6cm, linewidth=0.6pt}  
\begin{pspicture}(-5.5,-6)(6,6)

\pscircle[linewidth=0.8pt](0,0){5}

%
%  \Delta_1
%
%
%  First paint 
%
\pscustom[linestyle=none,fillstyle=crosshatch,hatchcolor=lightgray,linewidth=0pt]
{
\psarc(0,0){5}{45}{135}
\psarc(-3.18,3.18){0.5}{135}{315}
\psarc(0,0){4}{45}{135}
\psarc(3.18,3.18){0.5}{-135}{45}
}

%
% draw \Delta_1
%
\psarc[linewidth=1pt](0,0){5}{45}{135}
\psarc[linewidth=1pt](0,0){4}{45}{135}
\psarc[linewidth=1pt](-3.18,3.18){0.5}{135}{315}
\psarc[linewidth=1pt](3.18,3.18){0.5}{-135}{45}

%
% draw I_1
%
\psarc[linewidth=1.4pt,arrows=*-*](0,0){5}{70}{110}
\psecurve(-4,3)(-3,4)(0,4.5)(3,4)(4,3)

%
%  Delta_2: rotate Delta_1 for 120 degrees
%
\pscustom[linestyle=none,fillstyle=crosshatch,hatchcolor=lightgray]
{
\psarc(0,0){5}{165}{255}
\psarc(-1.165,-4.347){0.5}{255}{75}
\psarc(0,0){4}{165}{255}
\psarc(-4.347,1.165){0.5}{-15}{165}
}

\psarc[linewidth=1.2pt](0,0){5}{165}{255}
\psarc[linewidth=1.2pt](0,0){4}{165}{255}
\psarc[linewidth=1.2pt](-4.347,1.165){0.5}{-15}{165}
\psarc[linewidth=1.2pt](-1.165,-4.347){0.5}{255}{75}

\psarc[linewidth=1.4pt,arrows=*-*](0,0){5}{190}{230}
\psecurve(4.598,-1.96)(-1.964,-4.5981)(-3.897,-2.25)(-4.964,0.598)(-4.598,1.964)

%
%  Delta_3: rotate Delta_2 for 120 degrees
%
\pscustom[linestyle=none,fillstyle=crosshatch,hatchcolor=lightgray]
{
\psarc(0,0){5}{285}{15}
\psarc(4.347,1.165){0.5}{15}{195}
\psarc(0,0){4}{285}{15}
\psarc(1.165,-4.347){0.5}{105}{285}
}

\psarc[linewidth=1pt](0,0){5}{285}{15}
\psarc[linewidth=1pt](0,0){4}{285}{15}
\psarc[linewidth=1pt](4.347,1.165){0.5}{15}{195}
\psarc[linewidth=1pt](1.165,-4.347){0.5}{105}{285}

\psarc[linewidth=1.4pt,arrows=*-*](0,0){5}{310}{350}
\psecurve(-4.598,-1.96)(1.964,-4.5981)(3.897,-2.25)(4.964,0.598)(4.598,1.964)

%
%  Painting smaller disk white - to correct the ealier painting errors
%
\pscircle[linestyle=none,fillstyle=solid,fillcolor=white](0,0){4}

\rput(0,0){$D_0$}

\rput(0.1,2.9){$\Delta_1$}
\psline[linewidth=0.2pt]{->}(0,3.35)(0,4) 

\rput(-2.5,-1.4){$\Delta_2$}
\psline[linewidth=0.2pt]{->}(-2.95,-1.6)(-3.5,-2)

\rput(2.5,-1.4){$\Delta_3$}
\psline[linewidth=0.2pt]{->}(2.95,-1.6)(3.5,-2)

\rput(0,5.9){$I_1$}
\psline[linewidth=0.2pt]{->}(0,5.5)(0,5.05)

\rput(-5.1,-2.85){$I_2$}
\psline[linewidth=0.2pt]{->}(-4.9,-2.7)(-4.4,-2.4)

\rput(5.15,-2.85){$I_3$}
\psline[linewidth=0.2pt]{->}(4.9,-2.7)(4.4,-2.4)

\rput(-5.2,3.2){$b\D$}
\psline[linewidth=0.2pt]{->}(-4.75,3.1)(-4.2,2.8)

\rput(0,-2.5){$bD_0$}
\psline[linewidth=0.2pt]{->}(-0.6,-2.5)(-3.5,-3)
\psline[linewidth=0.2pt]{->}(0.55,-2.5)(3.5,-3)
\psline[linewidth=0.2pt]{->}(0,-2.8)(0,-5)

\rput(-5,5){$D_1=\cup_j \Delta_j$}

\end{pspicture}
\caption{The Cartan pair $(D_0,D_1)$}
\label{Fig1}
\end{figure}

Let $\chi\colon \C\to [0,1]$ be a smooth function
such that $\chi=1$ on $\cup_{j=1}^l I_j\subset \bT$ 
and $\chi=0$ on a neighborhood of the set 
$\clD \setminus \cup_{j=1}^l (\Delta_j\cup I'_j)$. 
Consider the map $\xi\colon\clD \times \clD\to X$ 
defined by
\begin{equation}
\label{discs-xi}
	\xi(\zeta,z)= 
		\begin{cases}  
					g_j\bigl(\zeta,\chi(\zeta)z\bigr), 
					  & \zeta \in \ol\Delta_j,\ z\in \clD,\ j=1,\ldots,l; \cr
  				f(\zeta),  & \chi(\zeta)=0,\ z\in \clD. 
  \end{cases}
\end{equation}
The latter condition holds in particular if 
$\zeta\in \bT\setminus \cup_{j=1}^m I'_j$.  
Note that $\xi$ is continuous and is holomorphic 
in the second variable. Then 
\begin{equation}
\label{est2}
   \int_0^{2\pi} \int^{2\pi}_0
       u\bigl(\xi(\E^{\I t},\E^{\I\theta})\bigr) \, 
       \frac{\d t}{2\pi} \frac{\d\theta}{2\pi}
   < \int_0^{2\pi}  v\bigl(f(\E^{\I t})\bigr)\,\frac{\d t}{2\pi}
    + 2\epsilon.
\end{equation}
Indeed, the integral over $\cup_{j=1}^l I_j$
is estimated by adding up the inequalities (\ref{est1}),
while the integral over the complementary set 
$E=\bT\setminus \cup_{j=1}^l I_j$ 
gives at most $\epsilon$ if the measure $|E|$ is small enough.  
(When estimating the integral over $E$ it is important to observe 
that $u\ge -M$ by the assumption, and $u$ is bounded from above
on each compact set. Since $-M\le v=\wh u \le u$, 
the same holds for $v$.)

To conclude the proof we apply the following lemma
to the data that we have just constructed.
We state it in a more general form since we shall
need it again in Step 5 below.

\begin{lemma}
\label{RH2} 
Let $X$ be a reduced complex space and let $u\colon X\to\R$ 
be a continuous function on $X$.
Assume that $f\in\cA(\D,X)$ is a holomorphic disc such that $f(\bT)\subset X_{\reg}$, 
and $\xi \colon \bT\times \overline \D\to X$ is a continuous map such that 
$\xi(\zeta,\cdotp)\in \cA(\D,X,f(\zeta))$ for every $\zeta\in \bT$.
Given $\epsilon>0$ there exists an analytic disc 
$h\in \cA(\D,X,f(0))$ such that 
\begin{eqnarray}
\label{small-increase2}
	\int_0^{2\pi} u(h(\E^{\I t})) \, \frac{\d t}{2\pi} < 
	\int^{2\pi}_0 \!\!\! \int^{2\pi}_0
       u\bigl(\xi(\E^{\I t},\E^{\I\theta})\bigr) \, 
       \frac{\d t}{2\pi} \frac{\d\theta}{2\pi} + \epsilon.
\end{eqnarray}
\end{lemma}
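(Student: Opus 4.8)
The plan is to reduce, arc by arc, to the Euclidean Riemann--Hilbert Lemma~\ref{RH1} and to assemble the resulting local modifications of $f$ by gluing sprays as in Section~\ref{sprays}. Since $f(\bT)\subset X_{\reg}$, the set $\Sigma=f^{-1}(X_{\sing})\cap\clD$ is a finite subset of $\D$; we adjoin the point $0$ to $\Sigma$. First I would replace $\xi$ near $\bT$ by holomorphic families, exactly as in Step~4 above: for $\zeta_0\in\bT$ pick $r<1$ close to $1$ so that the disc $\zeta\mapsto\xi(\zeta_0,r\zeta)$ has boundary in $X_{\reg}$ and average of $u$ within $\epsilon$ of $\int_0^{2\pi}u(\xi(\zeta_0,\E^{\I\theta}))\,\d\theta/2\pi$; embed it as the core of a spray by Lemma~\ref{sprays-exist}, and use its domination property and the implicit function theorem to obtain a disc $D_{\zeta_0}\ni\zeta_0$ in $\C$ and a holomorphic map $g_{\zeta_0}(\zeta,z)$ with $g_{\zeta_0}(\zeta,0)=f(\zeta)$. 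Continuity of $\xi$ and $u$ then bounds the average of $u\circ g_{\zeta_0}$ by that of $u\circ\xi$ on a short subarc, and compactness of $\bT$ produces finitely many closed arcs $I_j\ss I'_j\subset D_j\cap\bT$ with $\ol{I'_j}$ pairwise disjoint, $E:=\bT\setminus\bigcup_j I_j$ of arbitrarily small measure, and holomorphic maps $g_j\colon D_j\times\clD\to X$, continuous up to the boundary, with $g_j(\zeta,0)=f(\zeta)$, $g_j(\zeta,\bT)\subset X_{\reg}$ and $\dist\bigl(g_j(\zeta,z),\xi(\zeta,z)\bigr)<\epsilon$ for $\zeta\in\ol{I'_j}$, and
\[
\int_0^{2\pi}\!\!\int_{I_j}u\bigl(g_j(\E^{\I t},\E^{\I\theta})\bigr)\,\frac{\d t}{2\pi}\frac{\d\theta}{2\pi}\le\int_0^{2\pi}\!\!\int_{I_j}u\bigl(\xi(\E^{\I t},\E^{\I\theta})\bigr)\,\frac{\d t}{2\pi}\frac{\d\theta}{2\pi}+\frac{|I_j|\,\epsilon}{2\pi}.
\]
(When $\xi$ already has the form (\ref{discs-xi}), as in the application in Step~4, these data are essentially given.)

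Next I would set up the Cartan pair $(D_0,D_1)$ of Figure~\ref{Fig1} with $D_1=\bigcup_j\Delta_j$ ($\ol\Delta_j$ pairwise disjoint), $\ol\Delta_j\cap\bT=\ol{I'_j}$, $D_0\cup D_1=\D$, $\Sigma\subset D_0$, $\ol{D_{0,1}}\cap\Sigma=\emptyset$, and $\ol{D_{0,1}}\cap\ol\Delta_j\subset\{|\zeta|\le 1-c\}$ for some $c>0$ (if necessary, the cutoff $\chi$ of (\ref{discs-xi}) is used to keep the modification trivial near the endpoints of the arcs $I'_j$). Over $\ol\Delta_j$ I set $\phi_j(\zeta)=g_j(\zeta,c_j\zeta^{k_j})$, holomorphic in $\Delta_j$ and continuous on $\ol\Delta_j$, with $k_j$ large and $c_j\in\bT$; this is the $X$-valued analogue of the construction in the proof of Lemma~\ref{RH1}. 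For $k_j$ large $\phi_j$ is $\cC^0$-close to $g_j(\cdot,0)=f$ on $\{|\zeta|\le 1-c\}$, on $I_j$ it takes values on the circle $g_j(\E^{\I t},\bT)$, and --- using that $\theta\mapsto\theta+k_j t$ preserves normalised measure on $\bT$ for each fixed $t$, together with the mean value theorem --- the constant $c_j$ can be chosen so that $\int_{I_j}u(\phi_j(\E^{\I t}))\,\d t/2\pi$ does not exceed the displayed double integral of $u\circ g_j$ by more than $|I_j|\epsilon/2\pi$. After a small perturbation of $\Delta_j$ one may assume $\phi_j(\partial\Delta_j)\subset X_{\reg}$, so that $\sigma'_j:=\phi_j^{-1}(X_{\sing})\cap\Delta_j$ is a finite set disjoint from $\ol{D_{0,1}}$. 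Now build (Lemma~\ref{sprays-exist}) a spray $f^0\colon P_0\times\ol{D_0}\to X$ with core $f$ and exceptional set $\Sigma$, and sprays $f^{(j)}\colon P_0\times\ol\Delta_j\to X$ with core $\phi_j$ and exceptional set $\sigma'_j$; since $\phi_j$ is $\cC^0$-close to $f$ over $\ol{D_{0,1}}\cap\ol\Delta_j$, where $f^0$ is dominating, one may reparametrise $f^{(j)}$ near that overlap so that $f^0$ and $f^1:=(f^{(j)})_j$ are $\cC^0$-close on $P_0\times\ol{D_{0,1}}$.

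Finally I would apply Proposition~\ref{gluing-sprays} to $(D_0,D_1)$ and $f^0,f^1$ to get a spray $F\colon P\times\clD\to X$ of class $\cA^0(\D)$ with exceptional set $\Sigma\cup\bigcup_j\sigma'_j$; its core $h:=F(0,\cdot)\in\cA(\D,X)$ satisfies $h(0)=f(0)$ because $0\in\Sigma\setminus\ol{D_{0,1}}$ (Proposition~\ref{gluing-sprays}(iii)). Keeping $P_0$ small from the start makes $f^{(j)}(s,\cdot)$ uniformly close to $\phi_j$ for all $s\in P_0$; then Proposition~\ref{gluing-sprays}(iv) over $\ol\Delta_j$ shows $h(\E^{\I t})$ is close to $\phi_j(\E^{\I t})$ for $\E^{\I t}\in I_j$, while Proposition~\ref{gluing-sprays}(i) over $\ol{D_0}$ shows $h(\E^{\I t})$ is close to $f(\E^{\I t})$ for $\E^{\I t}$ in the gap arcs; hence $h(\bT)$ stays in a fixed relatively compact subset $\Omega'$ of $X$ on whose closure $u$ is bounded. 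Summing the estimate of the previous paragraph over $j$, bounding the integral over $E$ by $|E|\cdot\sup_{\ol{\Omega'}}|u|$ with $|E|$ small, and using continuity of $u$ to pass from $\phi_j$ to $h$, one gets
\[
\int_0^{2\pi}u\bigl(h(\E^{\I t})\bigr)\,\frac{\d t}{2\pi}<\int_0^{2\pi}\!\!\int_0^{2\pi}u\bigl(\xi(\E^{\I t},\E^{\I\theta})\bigr)\,\frac{\d t}{2\pi}\frac{\d\theta}{2\pi}+\epsilon
\]
after relabelling the error terms, which is (\ref{small-increase2}).

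The step I expect to be the main obstacle is meeting the hypothesis of Proposition~\ref{gluing-sprays} that $f^0$ and $f^1$ agree to high $\cC^0$-accuracy on $P_0\times\ol{D_{0,1}}$ as \emph{parametrised families}, not merely through their cores: this forces one to realise $f^{(j)}$ near the overlap as a genuine reparametrisation of $f^0$ obtained from the domination property, and to keep the parameter set $P_0$ small throughout so that the gluing does not spoil the boundary estimates for $\phi_j$. A secondary delicate point, already visible in Figure~\ref{Fig1}, is the geometry of the Cartan pair: the lenses $\Delta_j$ must dip to radius $\le 1-c$ precisely where they overlap $D_0$, with $D_0$ bulging out into the small gap arcs, which is what makes $\phi_j$ close to $f$ on the overlap and keeps all the constructed discs inside $\Omega'$.
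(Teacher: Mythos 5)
Your overall architecture (localize along boundary arcs via the holomorphic families $g_j$, modify over the lenses $\Delta_j$, then glue with Proposition \ref{gluing-sprays} after embedding everything in sprays) is the paper's architecture, but the central step has a genuine gap: the local discs $\phi_j(\zeta)=g_j(\zeta,c_j\zeta^{k_j})$, built \emph{without} a cutoff, cannot be fed into the gluing. Your fix is to demand $\ol{D_{0,1}}\cap\ol\Delta_j\subset\{|\zeta|\le 1-c\}$, but this is geometrically impossible: since $D_0\cup D_1=\D$ and $\ol\Delta_j\cap\bT=\ol{I'_j}$, the open set $D_0$ must contain points of $\D\setminus\cup_k\Delta_k$ arbitrarily close to the endpoints of $I'_j$, hence $D_{0,1}=D_0\cap D_1$ contains points of $\Delta_j$ of modulus arbitrarily close to $1$ near those corners. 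On such points $|c_j\zeta^{k_j}|$ is close to $1$, so $\phi_j$ is at "disc-radius" distance from $f=g_j(\cdot,0)$ there, no matter how large $k_j$ is; the two sprays are then not close on $P_0\times\ol D_{0,1}$ and Proposition \ref{gluing-sprays} does not apply. Your parenthetical remedy --- insert the cutoff $\chi$ as in (\ref{discs-xi}) --- kills holomorphy: $\zeta\mapsto g_j(\zeta,\chi(\zeta)c_j\zeta^{k_j})$ is no longer an analytic disc, and you give no mechanism to restore it. Restoring it is exactly what the paper's proof does: it pulls the problem back to $\C^2$ by setting $u'_j=u\circ g_j$ and $g'_j(\zeta,z)=(\zeta,\chi(\zeta)z)$ as in (\ref{xi}), applies the Euclidean Lemma \ref{RH1} over $\Delta_j$ (where the non-holomorphic cutoff family can be approximated by Laurent polynomials, and property (iv) makes the modification trivial away from $\ol I_j$, in particular on the corner part of the overlap), and only then pushes forward by the holomorphic family, $f'_j=g_j\circ h'_j$ as in (\ref{fjprime}). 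This transfer to $\C^2$ is the key idea your argument is missing; without it the corner regions of $\ol{D_{0,1}}$ obstruct the whole scheme.

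A secondary weakness is the point you yourself flag: Proposition \ref{gluing-sprays} needs the two sprays to be $\cC^r$-close as \emph{parametrized families} on $P_0\times\ol D_{0,1}$, and your proposal to "reparametrise $f^{(j)}$ near the overlap" using domination is left vague and is not obviously realizable. The paper avoids this issue by running the entire local modification parametrically: first embed $f$ as the core of a dominating spray $f_p$, then perform the Riemann--Hilbert modification simultaneously for all $p$ (holomorphically in $p$), so the resulting sprays $f'_{p,j}$ are automatically close to $f_p$ over the overlap and the gluing proposition applies directly. With the $\C^2$-transfer reinstated and the parametric construction done as in the paper, the rest of your outline (exceptional set containing $0$ so that $h(0)=f(0)$ via Proposition \ref{gluing-sprays}(iii), summing the arc estimates and absorbing the contribution of the small set $E$ using boundedness of $u$ on compacts) is sound.
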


Note that the estimate (\ref{small-increase2})
is the same as (\ref{small-increase})
in Lemma \ref{RH1} which pertains to the case $X=\C^n$,
but we do not get (and do not need) the other 
approximation statements in that lemma.  

% Assume for a moment that Lemma \ref{RH2} holds.

Combining the inequalities (\ref{est2}) 
and (\ref{small-increase2}) we obtain
\[
	v(x) \le \int_0^{2\pi} u(h(\E^{\I t})) \, \frac{\d t}{2\pi}
	<  \int^{2\pi}_0 v(f(\E^{\I t}))\, \frac{\d t}{2\pi} + 3\epsilon.
\]
Since this holds for every $\epsilon >0$, the 
property (\ref{eq:submean}) follows.
This proves that $v$ is plurisubharmonic on $X_{\reg}$
provided that Lemma \ref{RH2} holds.

\smallskip
\noindent
{\em Proof of Lemma \ref{RH2}.}
In the case when $X$ is a complex manifold, 
the first proof of Rosay  \cite{Rosay:Poletsky1}
uses a rather delicate construction of  Stein neighborhoods; 
this approach was developed further by 
L\'arusson and Sigurdsson \cite{Larusson-Sigurdsson2003}.
Later Rosay  \cite{Rosay:Poletsky2} gave another proof 
using an initial approximation by non-holomorphic discs 
with small $\dibar$-derivatives,  
approximating these by holomorphic discs, 
and finally patching the partial solutions
together by solving a nonlinear Cousin-I problem.
None of these methods seems to extend to complex 
spaces with singularities without major technical difficulties.
On the other hand, the method that we use here 
works in essentially the same way as in the nonsingular case.

Since the function $u$ is bounded on compacts,
it is a trivial matter to reduce the proof to the special situation
considered above so that the double integral in (\ref{est2})
changes by less than $\epsilon$; in the sequel we consider
this case. Let $\Delta_j$ be the discs chosen in the paragraph following
(\ref{est1}) (see Fig.\ \ref{Fig1}). Fix an index $j\in\{1,\ldots,l\}$.
We shall apply Lemma \ref{RH1} over $\ol\Delta_j$ 
to find an analytic disc $f'_j \colon \ol \Delta_j\to X$ 
that approximates $f$ uniformly as close as desired outside of a 
small neighborhood of the arc $\ol I_j$ 
in $\ol \Delta_j$ and  satisfies  the estimate
\begin{equation}
\label{local1}
	\int_{I_j} 
	u \bigl(f'_j (\E^{\I t})\bigr) \,\frac{\d t}{2\pi} < 
	\int^{2\pi}_0 \int_{I_j} 
       u\bigl( \xi(\E^{\I t},\E^{\I \theta})\bigr) \, 
       \frac{\d t}{2\pi} \frac{\d\theta}{2\pi}
       + \frac{|I_j|\,\epsilon}{2\pi}.
\end{equation}
Recall from (\ref{discs-xi}) that 
$\xi(\zeta,z)= g_j\bigl(\zeta,\chi(\zeta)z\bigr)$
for $\zeta \in \ol\Delta_j$ and $z\in \clD$.
Consider the function 
\[
	u'_j(\zeta,z)=u\bigl(g_j\bigl(\zeta,z)\bigr),\qquad
	\zeta\in \ol\Delta_j,\ z\in\clD
\]
and the smooth family of analytic discs in $\C^2_{(\zeta,z)}$ 
given by
\[
	g'_j(\zeta,z)=\bigl(\zeta,\chi(\zeta)\, z \bigr), \qquad 
	\zeta\in b\Delta_j,\ z\in\clD.
\]
Then 
\begin{equation}
\label{xi}
	\xi=g_j\circ g'_j\quad{\rm and}\quad u'_j=u\circ g_j
	\quad{\rm on}\quad b\Delta_j\times\clD.
\end{equation}
Applying Lemma \ref{RH1} with $\D$ replaced 
by $\Delta_j$, $u$ replaced by $u'_j$ and 
$g$ replaced by $g'_j$ furnishes an analytic disc
$h'_j\in \cA(\Delta_j,\C^2)$ which approximates 
the disc $\zeta\mapsto (\zeta,0)$ outside of a 
small neighborhood of the arc $\ol I_j$ and such that 
\begin{equation}
\label{local2}
	\int_{I_j} 
	u'_j \bigl(h'_j (\E^{\I t})\bigr) \,\frac{\d t}{2\pi} <
	\int^{2\pi}_0  \int_{I_j} 
       u'\bigl( g'_j(\E^{\I t},\E^{\I \theta})\bigr) \, 
       \frac{\d t}{2\pi} \frac{\d\theta}{2\pi}
        +  \frac{|I_j|\,\epsilon}{2\pi}.
\end{equation}
Since $g_j\colon \ol\Delta_j\times\clD\to X$ 
is holomorphic in the interior $\Delta_j\times\D$,  
the map
\begin{equation}
\label{fjprime}
	f'_j:=g_j\circ h'_j\colon \ol\Delta_j\to X
\end{equation}
is an analytic disc in $X$ such that 
$f'_j(\zeta)\approx g_j(\zeta,0)=f(\zeta)$
for $\zeta$ outside a small neighborhood of $\ol I_j$
in $\ol \Delta_j$. From (\ref{xi}) and (\ref{fjprime})
it follows that
\[
	u\circ f'_j= u\circ g_j \circ h'_j = u'_j\circ h'_j,
	\qquad
	u\circ\xi = u\circ g_j\circ g'_j = u'_j\circ g'_j
\]
hold on $b\Delta_j\times \clD$.
Hence the integrals in (\ref{local1}) equal
the corresponding integrals in (\ref{local2}), and so the disc $f'_j$
satisfies the desired properties.

If the approximation of $f$ by $f'_j$ is close enough 
for each $j=1,\ldots,l$, we can use Proposition \ref{gluing-sprays}
to glue this collection of discs into a single analytic disc 
$h\colon \ol{\,\D}\to X$ which approximates $f$ away from 
the union of arcs $\cup_{j=1}^l I_j$, and which approximates 
the disc $f'_j$ over a neighborhood of $\ol I_j$ for each $j$. 
In particular we can insure that for each $j=1,\ldots, l$ we have
\[
	\int_{I_j} u\bigl(h(\E^{\I t})\bigr) \,\frac{\d t}{2\pi} \approx
	\int^{2\pi}_0 \!\!\! \int_{I_j}
       u\bigl(g(\E^{\I t},\E^{\I\theta})\bigr) \, 
       \frac{\d t}{2\pi} \frac{\d\theta}{2\pi}.
\]	 
By adding up these terms and estimating the difference
over the remaining set $E=\bT\bs \cup_j I_j$ with small length
$|E|$ we obtain (\ref{small-increase2}).

Let us explain the gluing.
We apply Proposition \ref{sprays-exist} 
to embed $f$ as the central map $f_0$ in a dominating spray of discs 
$f_p\in \cA(\D,X)$ depending holomorphically
on a parameter $p$ in an open ball $P$ in some $\C^N$.
Next we apply the above approximation procedure 
simultaneously to all discs $f_p$ in the spray,
with a holomorphic dependence on the parameter.
(The obvious  details need not be repeated.)
This gives for every $j$ a holomorphic spray of discs 
$f'_{p,j} \in \cA(\Delta_j,X)$ ($p\in P)$  
approximating the spray $f_p$ over the complement 
of a small neigborhood of $\ol I_j$ in $\ol \Delta_j$. 
If the approximations are close enough, we can glue these 
sprays into a spray of discs $F_p \in \cA(\D,X)$
by using Proposition \ref{gluing-sprays}. 
(Here we allow the parameter set $P$ to shrink.)
For gluing we use a Cartan pair $(D_0,D_1)$
with $D_0\cup D_1=\D$ obtained as follows
(see Fig.\ \ref{Fig1}).
We get $D_0$ by denting the boundary circle $\bT=b\D$
slightly inward along each of the arcs $\ol I_j\subset \bT$.
The set $D_1$ equals $\cup_{j=1}^l \Delta_j$. 
Then the two sprays are close to each other over 
$\bar D_0\cap \bar D_1$, so Proposition  \ref{gluing-sprays} applies. 

The core disc $h=F_0$ obtained in this way is close  to 
the initial disc $f$ over the complement of a small neighborhood 
of $\cup_j \ol I_j$ in $\clD$, while over a neighborhood 
of $\ol I_j$ it is close to $f'_j$.
If the approximations are close enough then 
$h$ also satisfies the estimate (\ref{small-increase2})
in Lemma \ref{RH2}. 
\qed

\medskip
{\bf Step 5.}
We now prove that $v=\hat u$ is plurisubharmonic on all of $X$.
Let $w\colon X\to \R\cup\{-\infty\}$ be the upper regularization of $v|X_{\reg}$:
\[
		w(p)=\left \{\begin{array}{ll}
		v(p),& p\in X_{\reg};\cr
    \limsup_{q\in X_{\reg}, q\to p}v(q), & p\in X_{\sing}. \end{array} 
    \right.
\]
It is easy to see that $w\le u$. 
Since $X$ is normal, $w$ is plurisubharmonic on 
$X$ according to a result of Grauert and Remmert \cite[Satz 4]{Grauert-Remmert}. 

To complete the proof of the theorem we show that $v=w$ on $X_{\sing}$.
Fix a point $p\in X_{\sing}$ and a disc $f\in\cA(\D,X,p)$.
The fact that $w$ is plurisubharmonic implies that $w(p)\le P_w(f)$. 
Since $w\le u$, we also have $P_w(f)\le P_u(f)$. 
Therefore $w(p)\le P_u(f)$ for every $f\in\cA(\D,X,p)$ 
which implies that $w(p)\le v(p)$.

Suppose now that $w(p)< v(p)$; we shall reach a contradiction.
Choose $\epsilon>0$ so that $w(p)+3\epsilon< v(p)$.
Since $w$ is upper semicontinuous, there is a neighborhood $U$ of $p$
such that 
\begin{equation}
	w(q)\le w(p)	+\epsilon \quad {\rm for\ each}\ q\in U.
\label{nbd of p}
\end{equation}
We can choose an analytic disc $f\in\cA(\D,U,p)$ such that 
$f(\bT)\subset U\cap X_{\reg}$.  
Since $w=v$ on $X_{\reg}$, we have using (\ref{nbd of p}) 
\begin{equation}
P_v(f)=	P_w(f)\le \sup\{w\circ f(\zeta) \colon \zeta\in \bT\}\le w(p)+\epsilon.
\label{average f}
\end{equation}
For each point $\E^{\I t} \in \bT$ we choose  a disc 
$g_t=g(\E^{\I t},\cdotp) \in \cA(\D,X,f(\E^{\I t}))$ such that
$P_u(g_t) < v(f(\E^{\I t})) +\epsilon$.
As in Step 3 we can deform $g_t$ to a 
continuous family $g\colon \bT\times \clD\to X$
of analytic discs such that 
\begin{equation}
\label{eq:averaging1}
   \left| \int^{2\pi}_0 v\bigl(f(\E^{\I t})\bigr)\,\frac{\d t}{2\pi}
   -  \int^{2\pi}_0 \!\!\! \int^{2\pi}_0
       u\bigl(g(\E^{\I t},\E^{\I\theta})\bigr) \, 
       \frac{\d t}{2\pi} \frac{\d\theta}{2\pi} \right| <\epsilon.
\end{equation}
Lemma \ref{RH2} furnishes an analytic disc
$h\in \cA(\D,X,p)$ such that 
\[
	P_u(h) < 
			\int^{2\pi}_0 \!\!\! \int^{2\pi}_0
       u\bigl(g(\E^{\I t},\E^{\I\theta})\bigr) \, 
       \frac{\d t}{2\pi} \frac{\d\theta}{2\pi} + \epsilon.
\]
By (\ref{eq:averaging1}) and (\ref{average f}) we get
\[
	P_u(h)\le \int^{2\pi}_0 v\bigl(f(\E^{\I t})\bigr)\,\frac{\d t}{2\pi} 
	+ 2\epsilon\le   w(p)+3\epsilon< v(p)
\]
which contradicts the definition of $v$. 
This concludes the proof. 

\medskip
\textit{\bf Acknowledgement.}
The authors would like to thank the referee for his remarks.

Added to page proofs: Since the submission of this paper, the authors have used the  techniques developed in the present paper to obtain similar results for some other classes of disc functionals; see \cite{BDF2}.

%%%%%%%
%%%%%%%
%%%%%%%

\end{document}